\documentclass[12pt]{amsart}

\usepackage{latexsym,rawfonts}
\usepackage{amsfonts,amssymb}
\usepackage{amsmath,amsthm}

\usepackage[plainpages=false]{hyperref}
\usepackage{graphicx}
\usepackage{color}

\textwidth      = 6.00in
\textheight     = 8.25in

\pagestyle{plain}

\newtheorem{theorem}{Theorem}
\newtheorem{corollary}{Corollary}

\newtheorem{lemma}{Lemma}

\theoremstyle{definition}




\newcommand{\R}{\mathbb{R}}

\newcommand{\dd}{\mathop{}\!\mathrm{d}}

\newcommand{\set}[1]{\left\{#1\right\}}

\newcommand{\pd}{\partial}
\newcommand{\delbar}{\overline{\nabla}}

\newcommand{\var}{\varphi}

\newcommand{\uS}{\mathbb{S}^{n-1}}


\newcommand{\MA}{Monge-Amp\`ere }

\newcommand{\beq}{\begin{equation}}
\newcommand{\eeq}{\end{equation}}
\newcommand{\beqs}{\begin{eqnarray*}}
\newcommand{\eeqs}{\end{eqnarray*}}
\newcommand{\beqn}{\begin{eqnarray}}
\newcommand{\eeqn}{\end{eqnarray}}

\begin{document}

\title{A flow method for the dual Orlicz-Minkowski problem}

\author{ YanNan Liu \qquad Jian Lu }

\address{YanNan Liu: School of Mathematics and Statistics, Beijing Technology and Business University, Beijing 100048, P.R. China}
\email{liuyn@th.btbu.edu.cn}

\address{Jian Lu: South China Research Center for Applied Mathematics and Interdisciplinary Studies, South China Normal University, Guangzhou 510631, P.R. China}
\email{jianlu@m.scnu.edu.cn}
\email{lj-tshu04@163.com}

\thanks{The authors were supported by Natural Science Foundation of China (11871432). The first author was also supported in part by Beijing Natural Science Foundation (1172005).}

\date{}

\begin{abstract}
In this paper the dual Orlicz-Minkowski problem, a generalization of the $L_p$ dual Minkowski problem, is studied.
By studying a flow involving the Gauss curvature and support function, we obtain a new existence result of solutions to
this problem for smooth measures.
\end{abstract}

\keywords{
   \MA equation,
   dual Orlicz-Minkowski problem,
   Gauss curvature flow,
   Existence of solutions.
}

\subjclass[2010]{
35J96, 52A20, 53A07, 53C44.
}

\maketitle
\vskip4ex

\section{Introduction}

As important developments of the classical Brunn-Minkowski theory in convex geometry, the
Orlicz-Brunn-Minkowski theory and the dual Orlicz-Brunn-Minkowski theory are
developing rapidly in recent years. They have attracted great attention from
many researchers, see for example
\cite{CY.AiAM.81-2016.78,
  GHW.JDG.97-2014.427, GHWY.JMAA.430-2015.810,
  HSX.MA.352-2012.517, HP.Adv.323-2018.114, HH.DCG.48-2012.281,
  HLX.Adv.281-2015.906,
  HLYZ.Acta.216-2016.325,
HLYZ.JDG.110-2018.1,
  HLYZ.DCG.33-2005.699,
  Kon.AiAM.52-2014.82, Li.GD.168-2014.245, Lud.Adv.224-2010.2346,
  LYZ.JDG.56-2000.111,
  LYZ.PLMS3.90-2005.497, 
  LYZ.Adv.223-2010.220, Mes.JMAA.443-2016.146, SL.Adv.281-2015.1364, XJL.Adv.260-2014.350,
   ZZX.Adv.264-2014.700,
  ZX.Adv.265-2014.132}
and references therein.
In the recent groundbreaking work \cite{HLYZ.Acta.216-2016.325}, Huang, Lutwak,
Yang and Zhang introduced dual curvature measures and found their associated
variational formulas for the first time.
These new curvature measures were proved to connect two different known
measures, namely cone-volume measure and Alexandrov integral curvature.
Then the dual curvature measures were extended into the $L_p$ case in
\cite{LYZ.Adv.329-2018.85}.
The so called $L_p$ dual curvature measures partially unified various measures
in the $L_p$ Brunn-Minkowski theory and the dual Brunn-Minkowski theory.
Very recently, the $L_p$ dual curvature measures were developed to the Orlicz
case \cite{GHW+.CVPDE.58-2019.12, GHXY.2018}, which unified more curvature
measures and were named \emph{dual Orlicz curvature measures}.
These curvature measures are of central importance in the dual
Orlicz-Brunn-Minkowski theory, and the corresponding Minkowski problems are
called the \emph{dual Orlicz-Minkowski problems}.

Let $\varphi : (0,+\infty) \to (0,+\infty)$ and $G : \R^n\backslash 0 \to
(0,+\infty)$ be two given continuous functions.
For a convex body $K\subset \R^n$ with the origin $0\in K$,
the dual Orlicz curvature measure of $K$ is defined as
\begin{equation*}
  \widetilde{C}_{\varphi,G}(K,E) :=
  \frac{1}{n} \int_{\boldsymbol{\alpha}^*_K(E)}
  \frac{\varphi(h_K(\alpha_K(u))) G(\rho_K(u)u) \rho_K^n(u)}{h_K(\alpha_K(u))} \dd u
\end{equation*}
for each Borel set $E\subset S^{n-1}$,
where $\boldsymbol{\alpha}_K^*$ is the reverse radial Gauss image, $\alpha_K$ is
the radial Gauss map, $h_K$ is the support function and $\rho_K$ is the radial
function.
See the next section for more details.
For convenience, we here have used a slightly different notation in the definition of
$\widetilde{C}_{\varphi,G}(K,\cdot)$ from that of
\cite{GHW+.CVPDE.58-2019.12, GHXY.2018}.
The \emph{dual Orlicz-Minkowski problem} asks what are the necessary and
sufficient conditions for a Borel measure $\mu$ on the unit sphere $\uS$ to
be a multiple of the dual Orlicz curvature measure of a convex body $K$. Namely,
this problem is to find a convex body $K\subset \R^n$ such that
\begin{equation} \label{dOMP}
  c \dd \widetilde{C}_{\varphi,G}(K,\cdot)  =\dd\mu \text{ on } \uS
\end{equation}
for some positive constant $c$.

When the Radon-Nikodym derivative of $\mu$ with respect to the spherical measure
on $\uS$ exists, namely $\dd\mu =\frac{1}{n} f \dd x$ for a non-negative integrable function
$f$, the equation \eqref{dOMP} is reduced into
\begin{equation} \label{dOMP-f}
  c\, \varphi(h_K) G(\delbar h_K) \det(\nabla^2h_K +h_KI) =f \text{ on } \uS,
\end{equation}
where $\nabla$ is the covariant derivative with respect to an orthonormal frame
on $\uS$, $I$ is the unit matrix of order $n-1$, and
$\delbar h_K(x) :=\nabla h_K(x) +h_K(x) x$ is the point on $\partial K$ whose
outer unit normal vector is $x\in\uS$.
This is a Monge-Amp\`ere type equation.
When both $\varphi$ and $G$ are constant functions, Eq. \eqref{dOMP} or Eq.
\eqref{dOMP-f} is just the classical Minkowski problem.

When $\varphi(s)=s^{1-p}$ and $G(y)=|y|^{q-n}$,
the dual Orlicz curvature measure
$\widetilde{C}_{\varphi,G}(K,\cdot)$ is reduced into the $(p,q)$-th dual
curvature measure $\widetilde{C}_{p,q}(K,\cdot)$ defined in \cite{LYZ.Adv.329-2018.85}.
Correspondingly, equation \eqref{dOMP} or equation \eqref{dOMP-f} will become into
\begin{equation} \label{LpdMP}
  \dd \widetilde{C}_{p,q}(K,\cdot)  =\dd\mu \text{ on } \uS
\end{equation}
or
\begin{equation} \label{LpdMP-f}
  h_K^{1-p} |\delbar h_K|^{q-n} \det(\nabla^2h_K +h_KI) =f \text{ on } \uS
\end{equation}
respectively.
Here the constant $c$ can be merged into $h_K$ because of the
homogeneousness of $\widetilde{C}_{p,q}$.
Eq. \eqref{LpdMP} or Eq. \eqref{LpdMP-f} is just
the \emph{$L_p$ dual Minkowski problem}.
Many important Minkowski type problems are special cases of the $L_p$ dual
Minkowski problem.
When $q=n$, Eq. \eqref{LpdMP} is the $L_p$ Minkowski problem, which includes the
logarithmic Minkowski problem ($p=0$) and the centroaffine Minkowski problem
($p=-n$).
The $L_p$ Minkowski problem has been extensively studied, see e.g.
\cite{BHZ.IMRNI.2016.1807, BLYZ.JAMS.26-2013.831, BT.AiAM.87-2017.58,
CLZ.TAMS.371-2019.2623,
CW.Adv.205-2006.33,
  HLW.CVPDE.55-2016.117, HLX.Adv.281-2015.906, HLYZ.DCG.33-2005.699,
  JLW.Adv.281-2015.845, JLW.JFA.274-2018.826, JLZ.CVPDE.55-2016.41,
  Lu.SCM.61-2018.511, Lu.JDE.266-2019.4394, LJ.DCDS.36-2016.971,
  LW.JDE.254-2013.983, Lut.JDG.38-1993.131, LYZ.TAMS.356-2004.4359,
  Zhu.Adv.262-2014.909,
Zhu.JFA.269-2015.1070}
and Schneider's book \cite{Schneider.2014}, and corresponding references
therein.
When $p=0$, Eq. \eqref{LpdMP} is the dual Minkowski problem, which was first
introduced by \cite{HLYZ.Acta.216-2016.325} and then studied by
\cite{BHP.JDG.109-2018.411,
  CL.Adv.333-2018.87,
HP.Adv.323-2018.114,
 HJ.JFA, JW.JDE.263-2017.3230,Zha.CVPDE.56-2017.18,
Zha.JDG.110-2018.543}.
Note that Eq. \eqref{LpdMP} also contains the Alexandrov problem \cite{Ale.CRDASUN.35-1942.131}
($p=0$ and $q=0$),
which is the prescribed Alexandrov integral curvature problem.

After the $L_p$ dual Minkowski problem was first proposed in
\cite{LYZ.Adv.329-2018.85}, several existence results of solutions were
established in \cite{BF.JDE.266-2019.7980, CHZ.MA.373-2019.953, HZ.Adv.332-2018.57}.
Huang and Zhao \cite{HZ.Adv.332-2018.57}
proved the existence of solutions for general measure $\mu$ when $p>0$ and
$q<0$, for even $\mu$ when $pq>0$ and $p\neq q$, and for smooth $f$ when $p>q$.
Chen, Huang and Zhao \cite{CHZ.MA.373-2019.953}
studied the smooth case when $f$ is even and $pq\geq0$.
B\"{o}r\"{o}czky and Fodor \cite{BF.JDE.266-2019.7980}
proved the existence of solutions for general measure $\mu$ when $p>1$ and $q>0$.
Their existence results for the non-even case can be stated as the following two
theorems.
\let\xxxOldthetheorem\thetheorem
\def\thetheorem{\Alph{theorem}}
For a general measure $\mu$, we have
\begin{theorem} \label{thmA}
  \textup{(1)} \cite[Theorem 1.2]{HZ.Adv.332-2018.57}.
  Let $p>0$, $q<0$, and $\mu$ be a non-zero finite Borel measure on $\uS$. There
  exists a convex body $K\subset \R^n$ containing the origin in its interior such that Eq.
  \eqref{LpdMP} holds if and only if $\mu$ is not concentrated on any closed hemisphere.

  \textup{(2)} \cite[Theorem 1.2]{BF.JDE.266-2019.7980}.
  Let $p>1$, $q>0$, and $\mu$ be a finite Borel measure on $\uS$ that is not
  concentrated on any closed hemisphere. Then there exists a convex body $K\subset
  \R^n$ containing the origin such that Eq. \eqref{LpdMP} holds when $p\neq q$ or
  holds up to a positive constant when $p=q$.
\end{theorem}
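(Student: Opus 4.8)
The plan is to prove Theorem~A by the variational method that underlies both \cite{HZ.Adv.332-2018.57} and \cite{BF.JDE.266-2019.7980}. Write $\widetilde{V}_q(K)=\frac1n\int_{\uS}\rho_K^q\,\dd u$ for the $q$-th dual volume; by the variational formula of \cite{LYZ.Adv.329-2018.85}, the $L_p$ dual curvature measure is characterized by $\frac{\dd}{\dd t}\big|_{t=0}\widetilde{V}_q(K_t)=q\int_{\uS}\psi\,\dd\widetilde{C}_{p,q}(K,\cdot)$, where $K_t$ is the $L_p$-perturbation of $K$ by $\psi\in C(\uS)$, i.e.\ the convex body with support function $(h_K^p+t\psi)^{1/p}$. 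I would therefore study the constrained problem of minimizing $\int_{\uS}h_K^p\,\dd\mu$ over convex bodies $K\subset\R^n$ with $0\in K$ and $\widetilde{V}_q(K)=1$; this infimum is finite (a ball of the right radius is a competitor). Once a minimizer $K_0$ is shown to exist and to be nondegenerate, an Euler--Lagrange computation gives $\mu=c\,\widetilde{C}_{p,q}(K_0,\cdot)$ for some $c>0$; since $\widetilde{C}_{p,q}(\lambda K,\cdot)=\lambda^{q-p}\widetilde{C}_{p,q}(K,\cdot)$, when $p\neq q$ one absorbs $c$ by replacing $K_0$ with a suitable dilate, while when $p=q$ one cannot --- precisely the dichotomy recorded in part~(2). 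The ``only if'' direction of part~(1) is routine: it is standard that if $0\in\INT K$ then $\widetilde{C}_{p,q}(K,\cdot)$ assigns positive mass to every nonempty open subset of $\uS$, so $\mu$ cannot be concentrated on any closed hemisphere.

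For the attainment of the infimum, take a minimizing sequence $K_j$. To bound $\DIAM K_j$, note that if $\DIAM K_j\to\infty$ then, along a subsequence, $K_j$ contains a point $x_j$ with $\abs{x_j}\to\infty$ and $x_j/\abs{x_j}\to w_0$, so on the cap $\{v:\langle v,w_0\rangle>\delta\}$ we have $h_{K_j}(v)\geq\langle v,x_j\rangle\to\infty$; hence $\int_{\uS}h_{K_j}^p\,\dd\mu$ blows up unless $\mu(\{v:\langle v,w_0\rangle>0\})=0$, i.e.\ unless $\mu$ is concentrated on a closed hemisphere, contradicting the hypothesis. Thus $\DIAM K_j$ is bounded, and by the Blaschke selection theorem $K_j\to K_0$ in the Hausdorff metric along a subsequence, with $h_{K_j}\to h_{K_0}$ uniformly and $\int h_{K_j}^p\,\dd\mu\to\int h_{K_0}^p\,\dd\mu$. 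One must then verify that $K_0$ is an $n$-dimensional convex body, that $\widetilde{V}_q(K_0)=1$, and that the origin does not escape. If $q<0$ (case~(1)), any collapse --- $K_0$ lower-dimensional, or $0\in\pd K_0$ --- forces $\rho_{K_j}\to 0$ on a set of positive measure, so by Fatou's lemma $\widetilde{V}_q(K_j)=\frac1n\int\rho_{K_j}^q\to+\infty$, contradicting $\widetilde{V}_q(K_j)=1$; hence automatically $0\in\INT K_0$ and $\widetilde{V}_q$ is continuous there. If $q>0$ (case~(2)), the same integrand is now bounded, so dominated convergence rules out $K_0$ being lower-dimensional, but $0\in\pd K_0$ is no longer excluded for free, consistent with the weaker conclusion ``$0\in K_0$'' in part~(2); controlling this boundary behaviour, together with the convergence $\widetilde{C}_{p,q}(K_j,\cdot)\to\widetilde{C}_{p,q}(K_0,\cdot)$ near $\{h_{K_0}=0\}$, is what forces the hypothesis $p>1$.

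For the Euler--Lagrange step, perturb $K_0$ by $h_t=(h_{K_0}^p+t\psi)^{1/p}$ with $\psi\in C(\uS)$ and $\abs t$ small, and renormalize $\widehat{K}_t=\lambda(t)K_t$ with $\lambda(t)=\widetilde{V}_q(K_t)^{-1/q}$, so that $\widetilde{V}_q(\widehat{K}_t)\equiv 1$ and $\lambda(0)=1$. Since $h_{K_t}^p=h_{K_0}^p+t\psi$ exactly and $\widehat{K}_t$ is admissible with $\widehat{K}_0=K_0$, minimality yields $\frac{\dd}{\dd t}\big|_{t=0}\big[\lambda(t)^p\big(\int h_{K_0}^p\,\dd\mu+t\int\psi\,\dd\mu\big)\big]=0$. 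Evaluating $\lambda'(0)=-\int_{\uS}\psi\,\dd\widetilde{C}_{p,q}(K_0,\cdot)$ from the variational formula, one gets $\int_{\uS}\psi\,\dd\mu=p\big(\int_{\uS}h_{K_0}^p\,\dd\mu\big)\int_{\uS}\psi\,\dd\widetilde{C}_{p,q}(K_0,\cdot)$ for every $\psi\in C(\uS)$, hence $\mu=c\,\widetilde{C}_{p,q}(K_0,\cdot)$ with $c=p\int_{\uS}h_{K_0}^p\,\dd\mu>0$, and the dilation normalization from the first paragraph completes the proof. When $0\in\pd K_0$ in case~(2) the family $h_t$ is admissible only for one-sided $t$ and restricted $\psi$, so this step is instead run through an approximation of $\mu$ by measures with positive smooth density: one solves the smoother problems first (where the above argument applies cleanly) and passes to the limit using the a priori bounds.

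The main obstacle is the nondegeneracy step: extracting a nondegenerate limit from the minimizing sequence using only the ``not concentrated on a closed hemisphere'' hypothesis, with no symmetry assumption on $\mu$, and, when $q>0$, keeping both the functional and the $L_p$ dual curvature measures under control as the origin approaches $\pd K_0$. The sign $q<0$ makes case~(1) essentially automatic, via Fatou's lemma applied to $\rho_K^q$, whereas case~(2) genuinely requires $p>1$ for the a priori estimates and the weak convergence of the curvature measures; this is the technical heart of \cite{BF.JDE.266-2019.7980}.
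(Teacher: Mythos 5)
Note that the paper does not prove Theorem A: it is quoted purely as background, with part (1) attributed to \cite{HZ.Adv.332-2018.57} and part (2) to \cite{BF.JDE.266-2019.7980}, so there is no in-paper proof to compare against; the paper's own contributions (Theorems \ref{thm1} and \ref{thm2}) concern the smooth, Orlicz-type problem and are proved by a Gauss curvature flow, not by the variational method you outline. That said, your sketch does follow the strategy of the two cited papers, and the main steps you identify --- the diameter bound coming from the ``not concentrated on a closed hemisphere'' hypothesis, Fatou's lemma with $q<0$ ruling out degeneracy in part (1), and absorbing the Lagrange multiplier via the $(q-p)$-homogeneity of $\widetilde{C}_{p,q}$ when $p\neq q$ --- are accurate in outline. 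Two caveats worth flagging. First, the family $h_t=(h_{K_0}^p+t\psi)^{1/p}$ need not itself be a support function; one must pass to the Wulff shape $K_t$ of $h_t$, for which $h_{K_t}\le h_t$ only, so the line ``$h_{K_t}^p=h_{K_0}^p+t\psi$ exactly'' is false as written. The Euler--Lagrange argument survives because minimality gives a one-sided inequality and the variational formula of \cite{LYZ.Adv.329-2018.85} is stated for Wulff-shape families, but the step needs to be phrased that way. Second, dismissing the case $0\in\pd K_0$ in part (2) with ``approximate $\mu$ by smooth measures'' understates matters: controlling the measures and the functional as the origin approaches $\pd K_0$ is precisely where the hypothesis $p>1$ is used, and it is the technical core of \cite{BF.JDE.266-2019.7980}, not a routine limiting argument.
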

And for a function $f$, there is
\begin{theorem}[{\cite[Theorem 1.5]{HZ.Adv.332-2018.57}}] \label{thmB}
  Suppose $p>q$. For any given positive function $f\in C^\alpha(\uS)$ with $\alpha\in(0,1)$,
  there exists a unique solution $h_K\in C^{2,\alpha}(\uS)$ to Eq. \eqref{LpdMP-f}.
  If $f$ is smooth, then the solution is also smooth.
\end{theorem}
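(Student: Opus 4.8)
To prove Theorem~\ref{thmB}, in the spirit of the present paper I would construct a solution by a normalized Gauss curvature flow and obtain uniqueness from a maximum principle. Fix a smooth strictly convex body $M_0$ with the origin in its interior and support function $h_0$, and let the convex bodies $M_t$, $t\in[0,T)$, have support functions $h(\cdot,t)$ solving
\begin{equation*}
  \frac{\pd h}{\pd t}(x,t)
  = \eta(t)\,h(x,t) - \frac{f(x)\,h^{p}(x,t)\,\abs{\delbar h(x,t)}^{n-q}}{\det\!\big(\nabla^2 h(x,t)+h(x,t)I\big)},
  \qquad h(\cdot,0)=h_0 ,
\end{equation*}
where $\eta(t)>0$ is chosen so that a fixed scale-normalizing functional of $M_t$ (for instance a dual-volume type integral of $h$) is constant in $t$. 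A stationary point $h_\infty$ of this flow satisfies $h_\infty^{1-p}\abs{\delbar h_\infty}^{q-n}\det(\nabla^2 h_\infty+h_\infty I)=f/\eta_\infty$ for the limiting value $\eta_\infty$ of $\eta$, i.e.\ it solves \eqref{LpdMP-f} up to a positive constant. On the region where $\nabla^2 h+hI$ is positive definite the right-hand side is a strictly decreasing function of the Hessian, so the equation is strictly parabolic of \MA type; hence a smooth solution exists uniquely on a maximal interval $[0,T)$ by standard parabolic theory, with strict convexity preserved for short time.

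The core of the argument is the a priori estimates, and it is here that $p>q$ is used. At a point where $h(\cdot,t)$ attains its spatial maximum $R(t)$ one has $\nabla h=0$ and $\nabla^2 h\le 0$, so $\abs{\delbar h}=R(t)$ and $0<\det(\nabla^2 h+hI)\le R(t)^{n-1}$, and the flow gives $R'(t)\le \eta(t)R(t)-(\min f)\,R(t)^{p-q+1}$; since $p-q+1>1$, once $\eta(t)$ is known to be bounded above this superlinear dissipation rules out blow-up. Symmetrically, at a spatial minimum $r(t)$ one gets $\abs{\delbar h}=r(t)$, $\det(\nabla^2 h+hI)\ge r(t)^{n-1}$, hence $r'(t)\ge \eta(t)r(t)-(\max f)\,r(t)^{p-q+1}$, and again $p-q+1>1$ keeps $r(t)$ bounded below by a positive constant; showing that $\eta(t)$ stays in a fixed compact subinterval of $(0,\infty)$ is part of this $C^0$ analysis. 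With $0<r_\infty\le h\le R_\infty$ all $M_t$ lie in a fixed annulus, which bounds $\abs{\nabla h}$ and keeps $\abs{\delbar h}$ between two positive constants. The genuinely hard step is the $C^2$ estimate: an upper bound for the principal radii $\nabla^2 h+hI$ --- which, combined with a two-sided bound for $\det(\nabla^2 h+hI)$ read off from the equation, also yields a lower bound, so parabolicity does not degenerate --- obtained by the maximum principle applied to an auxiliary function built from the largest eigenvalue of $\nabla^2 h+hI$ and the flow speed; the lower-order factors $h^{1-p}$ and $\abs{\delbar h}^{q-n}$ contribute gradient terms that must be absorbed using the $C^0$ and $C^1$ bounds. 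Once $C^2$ bounds hold, the equation is uniformly parabolic and concave in the Hessian, so Evans--Krylov and Schauder give estimates in every $C^{k,\alpha}$ uniform in $t$, and therefore $T=\infty$.

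For convergence I would realize the flow as the (normalized) gradient flow of a functional $\mathcal{J}$ on convex bodies which is monotone and bounded along the flow, so that the time integral of the dissipation is finite; then along some sequence $t_i\to\infty$ one has $\pd_t h(\cdot,t_i)\to 0$, and by the uniform higher-order bounds and Arzel\`a--Ascoli a subsequence of $h(\cdot,t_i)$ converges in $C^{2,\alpha}$ to a limit $h_\infty$ solving $h_\infty^{1-p}\abs{\delbar h_\infty}^{q-n}\det(\nabla^2 h_\infty+h_\infty I)=f/\eta_\infty$ with $\eta_\infty>0$. Setting $h_K:=\lambda h_\infty$ with $\lambda=\eta_\infty^{1/(q-p)}$ --- well defined precisely because $p\neq q$ --- produces an exact solution of \eqref{LpdMP-f}; this $h_K$ is $C^{2,\alpha}$ by the Schauder theory, and smooth when $f$ is smooth. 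Uniqueness is the short clean part: if $h_1,h_2\in C^2(\uS)$ both solve \eqref{LpdMP-f} and $x_0$ maximizes $h_1/h_2$, with value $\lambda$, then at $x_0$ one has $\nabla h_1=\lambda\nabla h_2$, and $\nabla^2 h_1\le\lambda\nabla^2 h_2$ (from $\nabla^2\log(h_1/h_2)\le 0$), hence $\abs{\delbar h_1}=\lambda\abs{\delbar h_2}$ and $\det(\nabla^2 h_1+h_1 I)\le\lambda^{n-1}\det(\nabla^2 h_2+h_2 I)$ there; plugging into \eqref{LpdMP-f} at $x_0$ yields $f(x_0)\le\lambda^{q-p}f(x_0)$, so $\lambda^{q-p}\ge 1$, and since $q-p<0$ this forces $\lambda\le 1$, i.e.\ $h_1\le h_2$; by symmetry $h_1=h_2$. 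Thus the main obstacle is the curvature ($C^2$) estimate, together with the identification of a suitable monotone functional $\mathcal{J}$; by contrast the $C^0$ estimate and the uniqueness both become routine once $p>q$ is exploited.
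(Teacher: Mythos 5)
Your proposal is correct and follows essentially the same strategy that this paper uses to recover Theorem~\ref{thmB} as a by-product of its Theorems~\ref{thm1}--\ref{thm3}: a Gauss-curvature-type flow for existence, coupled with a maximum-principle comparison of $\log(h_1/h_2)$ for uniqueness. Your uniqueness argument is exactly the specialization to $\varphi(s)=s^{1-p}$, $G(y)=|y|^{q-n}$ of the paper's Theorem~\ref{thm3}, and your algebra there checks out. The notable departure is your use of a normalized flow with a time-dependent factor $\eta(t)$ keeping a dual-volume integral fixed, followed by a rescaling $h_K=\lambda h_\infty$ at the end. The paper deliberately runs the \emph{unnormalized} flow \eqref{floweq} with coefficient $1$; when $p>q$ the condition \eqref{cond-f} is automatically satisfied (the left side is $\limsup s^{q-p}=0$, the right side is $\liminf s^{q-p}=+\infty$), and the $C^0$ bounds of Lemma~\ref{lem3.1} then come for free, directly producing a solution with $c=1$. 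The paper explicitly advertises this as an advantage of its flow method, since it bypasses the homogeneity trick your final rescaling depends on. Your normalization introduces three extra chores you acknowledge only in passing: showing $\eta(t)$ stays in a compact subinterval of $(0,\infty)$, verifying that the monotone functional still dissipates under the $\eta(t)h$ term, and arguing the rescaling at the end. None of these are fatal, but they make the argument longer than necessary. One further imprecision: you say the lower bound on $\det(\nabla^2h+hI)$ is ``read off from the equation,'' but along the \emph{evolution} equation this requires a separate maximum-principle estimate on the speed (equivalently on $\mathcal{K}$, as in the paper's Lemma~\ref{lem3.3} with the auxiliary function $Q=(F-h)/(h-\varepsilon_0)$); only at the stationary elliptic equation can you read it off. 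Your description of the upper bound on principal radii via an eigenvalue-based auxiliary function matches the paper's Lemma~\ref{lem3.4}.
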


\let\thetheorem\xxxOldthetheorem
\setcounter{theorem}{0}

In this paper we mainly consider the existence of solutions to the dual
Orlicz-Minkowski problem.
Our purpose is to extend Theorems \ref{thmA} and \ref{thmB} to Eq. \eqref{dOMP}
or Eq. \eqref{dOMP-f}.
Very recently, Gardner et al. provided a sufficient condition for the existence
of solutions to Eq. \eqref{dOMP}, which contained Theorem \ref{thmA} (1) as a
special case, see \cite[Theorem 6.4]{GHW+.CVPDE.58-2019.12}.
In the sequel \cite{GHXY.2018}, they obtained another existence result about
Eq. \eqref{dOMP}, which contained Theorem \ref{thmA} (2) as a
special case, see \cite[Theorem 6.3]{GHXY.2018}.
To the best of our knowledge, there is no other existence result about Eq.
\eqref{dOMP} or Eq. \eqref{dOMP-f}.

In this paper we extend Theorem \ref{thmB} to the dual Orlicz-Minkowski problem.
Namely we obtain the following

\begin{theorem} \label{thm1}
  Suppose $\alpha\in(0,1)$, $\varphi\in C^\alpha(0,+\infty)$, and $G\in C^1(\R^n\backslash0)$.
  For any given positive function $f\in C^\alpha(\uS)$ satisfying
  \begin{equation} \label{cond-f}
    \limsup_{s\to+\infty} \bigl[ \varphi(s) \max_{|y|=s} G(y) s^{n-1} \bigr]
    <f<
    \liminf_{s\to0^+} \bigl[ \varphi(s) \min_{|y|=s} G(y) s^{n-1} \bigr],
  \end{equation}
  there exists a solution $h_K\in C^{2,\alpha}(\uS)$ to Eq. \eqref{dOMP-f} with $c=1$.
  If $\varphi$, $G$ and $f$ are smooth, then the solution is also smooth.
\end{theorem}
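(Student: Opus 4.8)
The plan is to use the Gauss curvature flow suggested by the paper's title: given the fixed positive datum $f\in C^\alpha(\uS)$, I would consider the evolution
\[
  \frac{\partial X}{\partial t}(x,t) = -\frac{\varphi(h) G(\delbar h)\,\mathcal{K}}{f}\,\nu + h\,\nu,
\]
or equivalently, in terms of the support function, the parabolic \MA equation
\[
  \frac{\partial h}{\partial t} = h - \frac{\varphi(h)\,G(\delbar h)\,\det(\nabla^2 h + hI)}{f}
  \quad\text{on } \uS\times(0,\infty),
\]
starting from a suitable smooth, strictly convex initial body $M_0$ (for instance a large ball, or a ball whose radius is chosen using the two bounds in \eqref{cond-f}). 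A stationary point of this flow is exactly a solution of \eqref{dOMP-f} with $c=1$, so the strategy is to prove long-time existence and convergence of the flow to such a stationary point. The short-time existence follows from the standard theory of parabolic \MA equations on $\uS$ once one checks that the right-hand side is uniformly parabolic near the initial data (it is, since $M_0$ is strictly convex and $\varphi, G, f$ are positive on the relevant range).

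The core of the argument is a priori estimates. First I would establish $C^0$ bounds: uniform positive upper and lower bounds on $h(\cdot,t)$, i.e. $0 < r_0 \le h(\cdot,t) \le R_0 < \infty$ independent of $t$. This is exactly where condition \eqref{cond-f} enters. At a spatial maximum point $x_t$ of $h(\cdot,t)$ one has $\nabla^2 h \le 0$ and $\delbar h = h\,x_t$, so $\det(\nabla^2 h + hI) \le h^{n-1}$; if $\max h$ is very large, then by the left inequality in \eqref{cond-f} the quantity $\varphi(h)G(\delbar h) h^{n-1}/f < 1$ near such points, forcing $\partial_t(\max h) < \max h$ — actually one wants the sharper conclusion that $\max h$ cannot cross a fixed large threshold, which follows because at the threshold $\partial_t h \le h - h \cdot(\text{something} <1) $... more carefully, one compares with the ODE obtained by evaluating at the extremum and uses that $\det(\nabla^2 h + hI)\ge$ (something) is false, so instead I would run the maximum-principle argument on the auxiliary quantity and use the \emph{strict} inequalities in \eqref{cond-f} to trap $h$ between two constants. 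The lower bound is symmetric, using the right inequality in \eqref{cond-f} and $\det(\nabla^2h+hI)\ge h^{n-1}$ at a minimum point where $\nabla^2 h \ge 0$.

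With $C^0$ bounds in hand, the next steps are standard for Gauss curvature type flows: (i) a $C^1$ bound, i.e. a bound on $|\delbar h| = |\nabla h + hx|$, which in fact follows from the $C^0$ bounds together with convexity (the body stays in a fixed annulus, so its diameter and the size of $\nabla h$ are controlled); (ii) a $C^2$ bound, i.e. upper and lower bounds on the principal radii of curvature $\nabla^2 h + hI$ — the lower bound (strict convexity, hence uniform parabolicity) is the delicate one and is obtained by applying the maximum principle to $\log \det(\nabla^2 h + hI)$ or to the smallest eigenvalue of $\nabla^2 h + hI$, differentiating the equation twice and absorbing the terms coming from $\varphi'$, $\nabla G$ using the $C^0$ and $C^1$ bounds; the upper bound follows similarly or from the lower bound plus the equation. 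Once the equation is uniformly parabolic with bounded coefficients, Krylov–Safonov and Schauder theory give uniform $C^{2,\alpha}$ (and then, by bootstrapping, $C^\infty$ if the data are smooth) estimates, yielding long-time existence.

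Finally, for convergence I would introduce a monotone functional along the flow — the natural candidate being
\[
  \mathcal{J}(t) = \int_{\uS} \Bigl( \int_{1}^{h(x,t)} \frac{\varphi(s)}{s}\,ds \Bigr) f(x)\,dx
  \;-\; \frac{1}{n}\int_{\uS} \widehat G(\delbar h)\,\text{(volume-type term)},
\]
designed so that $\tfrac{d}{dt}\mathcal{J} \le 0$ with equality iff $h$ is stationary, i.e. $\mathcal{J}$ is essentially the dual Orlicz entropy whose Euler–Lagrange equation is \eqref{dOMP-f}; here $\widehat G$ is a radial primitive of $G$. Combining this monotonicity with the uniform a priori estimates and compactness, a subsequence $h(\cdot,t_k)$ converges in $C^{2}$ to a limit $h_K$ which must be a stationary solution, hence solves \eqref{dOMP-f} with $c=1$; the $C^{2,\alpha}$ regularity of $h_K$ is inherited from the uniform estimates, and the smooth case follows from bootstrapping. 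The main obstacle I expect is the $C^2$ (uniform parabolicity / strict convexity) estimate: controlling the lower bound on $\nabla^2 h + hI$ requires carefully handling the non-homogeneous, non-monotone dependence on $h$ through $\varphi$ and on $\delbar h$ through $G$, and it is precisely here that the $C^1$ regularity hypothesis $G\in C^1(\R^n\setminus 0)$ and the Hölder hypothesis on $\varphi$ get used; a secondary subtlety is making the $C^0$ trapping argument genuinely work with only a $\limsup$/$\liminf$ in \eqref{cond-f} rather than pointwise monotonicity.
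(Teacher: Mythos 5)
The overall strategy you propose---run a Gauss curvature flow whose stationary points are solutions of \eqref{dOMP-f}, prove $C^0$, $C^1$, $C^2$ a priori estimates via maximum principles (with condition \eqref{cond-f} entering through the $C^0$ estimate), invoke Krylov--Safonov and Schauder to get long-time existence, use a monotone entropy-type functional for convergence, and approximate for the H\"older case---is exactly the paper's strategy. However, the flow equation you wrote down is wrong in a way that sinks the whole argument. First, your two displayed equations are mutually inconsistent: the speed $\varphi G\mathcal{K}/f$ in the $X$-equation corresponds in support-function form to $\partial_t h = h - \varphi(h)G(\delbar h)\mathcal{K}/f$, whereas you then wrote $\partial_t h = h - \varphi(h)G(\delbar h)\det(\nabla^2h+hI)/f$; since $\mathcal{K} = 1/\det(\nabla^2h+hI)$, these differ by a reciprocal. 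Worse, the second one (with $\det$) is \emph{backward} parabolic: $\partial(\det b)/\partial b_{ij}$ is positive definite, so the right-hand side decreases with the Hessian and the equation is ill-posed. The first one is parabolic, but its stationary condition is $hf = \varphi G\mathcal{K}$, which is not Eq.~\eqref{dOMP-f}, and moreover your $C^0$ trapping fails for it: at a spatial maximum one has $\mathcal{K}\ge h^{1-n}$ and, for $h$ large, $\varphi G h^{n-1} < f$ from \eqref{cond-f}, hence $\varphi G\mathcal{K}/f > h^{1-n}\cdot\text{something}$ gives no useful upper bound on the speed, and in fact $\partial_t h > 0$ there, so the flow would not stay bounded.

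The paper's flow has speed $F = f\mathcal{K}h/(G\varphi)$, i.e.
\[
\frac{\partial h}{\partial t} = h - \frac{f(x)\,\mathcal{K}\,h}{G(\delbar h)\,\varphi(h)} = h\Bigl(1 - \frac{f\mathcal{K}}{G\varphi}\Bigr).
\]
The crucial differences from yours are: (i) the extra factor of $h$, so that the stationary condition $F = h$ divides out to $f\mathcal{K}/(G\varphi)=1$, i.e.\ $\varphi(h)G(\delbar h)\det(\nabla^2h+hI)=f$, exactly Eq.~\eqref{dOMP-f} with $c=1$; (ii) $\mathcal{K}$ in the numerator of the speed, so $\partial F/\partial b_{ij} = -Fb^{ij}$ is negative definite and the flow is genuinely parabolic; and (iii) $f$, $G$, $\varphi$ placed so that the $C^0$ maximum-principle argument closes: at a max, $\mathcal{K}\ge h^{1-n}$ and $|\delbar h|=h$, so $f\mathcal{K}/(G\varphi) \ge f/(G\varphi h^{n-1}) > 1$ once $h$ exceeds a threshold, by the left inequality in \eqref{cond-f}, forcing $\partial_t h < 0$; the lower bound is symmetric. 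Your monotone functional is also not quite right: the correct one is $J(M) = \int_{\uS}\widetilde\varphi(h)f\,dx - \int_{\uS}\widetilde G(\rho,u)\,du$ with $\widetilde\varphi(t) = \int^t\frac{ds}{\varphi(s)}$ (not $\int^t\frac{\varphi(s)}{s}ds$) and $\widetilde G(r,u)=\int^r G(s,u)s^{n-1}ds$, which with the change of measure $\rho^n\,du = (h/\mathcal{K})\,dx$ and the coupled evolution $\partial_t\rho/\rho = \partial_t h/h$ yields $\frac{d}{dt}J = -\int_{\uS}\frac{G}{\mathcal{K}}h\bigl(\frac{f\mathcal{K}}{\varphi G}-1\bigr)^2dx\le0$. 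Finally, to pass from smooth $\varphi,G,f$ to H\"older data you also need an elliptic a priori $C^0$ bound on any solution of \eqref{dOMP-f} (provided in the paper by a direct max-point argument) before the approximation; your sketch omits this.
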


One can see that Theorem \ref{thm1} applies to the situation when
$\varphi(s)=s^{1-p}$, $G(y)=|y|^{q-n}$, $p>q$, and $f>0$, which recovers Theorem
\ref{thmB}.
In the dual Orlicz-Minkowski problem, the constant $c$ in Eq.
\eqref{dOMP} or Eq. \eqref{dOMP-f} is not known and can not be set to $1$ in general, see
\cite{GHW+.CVPDE.58-2019.12, GHXY.2018}.
The equation of the Orlicz-Minkowski problem also contains a constant $c$ which
is not equal to $1$ in general, see
\cite{HLYZ.Adv.224-2010.2485,
  HH.DCG.48-2012.281,
JL.Adv.344-2019.262}.
Note that in our Theorem \ref{thm1},
we can require $c=1$ in Eq. \eqref{dOMP-f}.
This is an advantage of the flow method used in our proof, compared to the usual
variational method.

Theorem \ref{thmB} was proved by the continuity method in
\cite{HZ.Adv.332-2018.57}, for which the uniqueness of solution to Eq.
\eqref{LpdMP-f} is crucial.
However solutions to the dual Orlicz-Minkowski problem may be not unique in
general. One can easily construct examples when $f$ is a constant function in
Eq. \eqref{dOMP-f}.
We may need to impose strong restrictions on $\varphi$ and $G$ to obtain the
uniqueness, see our discussion Theorem \ref{thm3} for $G(y)=G(|y|)$ and see also 
\cite[Theorem 6.5]{GHXY.2018}.
Due to the lack of uniqueness of Eq. \eqref{dOMP-f} in Theorem \ref{thm1}, the
continuity method is no longer applicable to prove Theorem \ref{thm1}. 
Instead, we use a flow method involving the Gauss curvature and support function.

Let $M_{0} $ be a smooth, closed, uniformly convex hypersurface in Euclidean space
$\R^{n}$ enclosing the origin. Suppose that $M_{0}$ is given by a smooth embedding
$X_{0}: \mathbb{S}^{n-1} \rightarrow \R^{n}$.
In this paper we study the long-time existence and convergence of a family of
hypersurfaces $\{M_{t}\}$ given by smooth maps $X(\cdot , t): \mathbb{S}^{n-1}
\rightarrow \R^{n}$ satisfying the following flow equation:
\begin{equation}\label{floweq}
  \begin{split}
    \frac{\pd X}{\pd t}(x,t) &= -f(\nu)\mathcal{K}
    \frac{\langle X, \nu \rangle}{G(X)\varphi(\langle X, \nu \rangle)}\nu  + X(x,t), \\ 
    X(x,0) &= X_{0}(x),
  \end{split}
\end{equation}
where $\langle \cdot, \cdot\rangle$ is the standard inner product in $\R^{n}$,
$\mathcal{K}$ is the Gauss curvature of the hypersurface $M_{t}$ at $X$, and $\nu $
is the unit outer normal vector of $M_t$ at $X$.

The geometric flow generated by the Gauss curvature was first studied by Firey \cite{Fir.M.21-1974.1}
to describe the shape of a tumbling stone.
Since then, various Gauss curvature flows have been extensively studied, see
e.g.
\cite{And.IMRN.1997.1001,
AGN.Adv.299-2016.174,
BCD.Acta.219-2017.1,
BIS.AP.12-2019.259,
CL,
CW.AIHPANL.17-2000.733,
GL.DMJ.75-1994.79,
Ger.CVPDE.49-2014.471,
GN.JEMSJ.19-2017.3735,
Iva.JFA.271-2016.2133,
Urb.JDG.33-1991.91,
Wan.TAMS.348-1996.4501}
and references therein.
Our flow \eqref{floweq} is inspired by \cite{LSW.JEMS} in which a flow was used to
study the dual Minkowski problem.
However flow \eqref{floweq} is more complicated than that in \cite{LSW.JEMS}, since
it involves two more functions $\varphi$ and $G$. Note $G$ is a function of $X$ other
than $|X|$, which needs more efforts to deal with when estimating the principal
curvatures of $M_t$.

In this paper, we obtain   the long-time existence and convergence of the flow \eqref{floweq}.

\begin{theorem} \label{thm2}
  Let $M_{0}$ be a smooth closed uniformly convex hypersurface in Euclidean space
  $\R^{n}$ enclosing the origin.
  Assume $f, \var, G$ are smooth and satisfy the condition \eqref{cond-f}.
  Then flow \eqref{floweq} has a  smooth solution $M_{t}$ for all time
  $t > 0$, and a subsequence of $M_{t}$ converges in $C^{\infty}$ to a
  positive, smooth, uniformly convex solution to Eq. \eqref{dOMP-f} with $c=1$.
\end{theorem}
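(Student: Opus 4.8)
The plan is to follow the now-standard parabolic approach to Gauss curvature type flows, reducing the geometric evolution \eqref{floweq} to a scalar parabolic \MA equation for the support function and establishing uniform a priori estimates.

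\textbf{Step 1: Reduction to a scalar flow.} First I would parametrize $M_t$ by the Gauss map, so that $M_t$ is determined by its support function $h(x,t)$ on $\uS$. A routine computation (as in \cite{LSW.JEMS}) shows that \eqref{floweq} is equivalent to the scalar parabolic equation
\begin{equation*}
  \frac{\pd h}{\pd t}(x,t) = -\frac{f(x)\,h(x,t)}{G(\delbar h)\,\var(h)\,\det(\nabla^2 h + hI)} + h(x,t),
  \qquad h(\cdot,0)=h_0,
\end{equation*}
where I have used $\mathcal K = 1/\det(\nabla^2 h+hI)$ and $\langle X,\nu\rangle = h$. Short-time existence follows from the parabolic theory for this equation, since $\det(\nabla^2 h_0+h_0 I)>0$ by uniform convexity of $M_0$; let $T^*$ denote the maximal existence time. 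Everything then comes down to $C^0$, $C^1$, and $C^2$ (equivalently, positive upper and lower bounds on the principal curvatures) estimates that are independent of $T^*$.

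\textbf{Step 2: A priori estimates.} The $C^0$ estimate is where condition \eqref{cond-f} is used: at a spatial maximum point of $h(\cdot,t)$ one has $\nabla^2 h + hI \le hI$, hence $\det(\nabla^2 h+hI)\le h^{n-1}$ and $|\delbar h| \le h$ there, so $\pd_t h_{\max} \le h_{\max}\bigl(1 - f/(\var(h_{\max})\max_{|y|=h_{\max}}G(y)\,h_{\max}^{n-1})\bigr)$; the right inequality in \eqref{cond-f} forces this to be negative once $h_{\max}$ is large, giving a uniform upper bound. Symmetrically, at a minimum point $\nabla^2 h+hI\ge hI$ and the left inequality in \eqref{cond-f} yields a uniform positive lower bound. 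This gives $0<c_1\le h\le c_2$ uniformly, and in particular the origin stays enclosed with controlled distance to $\partial M_t$. Once $h$ is bounded, $|\delbar h|=|\nabla h + hx|$ is bounded in terms of $h$ and $|\nabla h|$, and a gradient bound follows by differentiating the equation and applying the maximum principle to $|\nabla h|^2$, or more simply from the convexity-based fact that $|\nabla h| \le \max h$.

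\textbf{Step 3: $C^2$ estimates and convergence.} The main obstacle is the two-sided bound on the eigenvalues of the matrix $W=\nabla^2 h+hI$, i.e. the principal radii of curvature. For the upper bound on $W$ (and hence a positive lower bound on the Gauss curvature), I would apply the maximum principle to an auxiliary function of the form $\log\lambda_{\max}(W) - A h + B|\delbar h|^2$ (or the trace version), as in \cite{LSW.JEMS}; the new difficulty, flagged by the authors, is that $G=G(X)$ depends on the full position vector $\delbar h$ rather than only on $|\delbar h|$, so differentiating $\log G(\delbar h)$ twice produces extra gradient-of-$G$ terms that must be absorbed using the $C^1$ bound and the $C^1$-regularity of $G$ — choosing the constants $A,B$ appropriately. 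A lower bound on the eigenvalues of $W$ then follows either from an analogous argument applied to $\lambda_{\min}$ or, more cleanly, from bounding $\det W$ from below (using the evolution equation at a minimum of $h$ together with the already-established bounds) and combining with the upper bound on $W$. With uniform $C^2$ estimates and uniform parabolicity, Krylov--Safonov and Schauder theory give uniform $C^{2,\alpha}$ and then $C^\infty$ estimates, so $T^*=\infty$. Finally, to extract a stationary limit I would show $\int_0^\infty \int_{\uS} |\pd_t h|\, \le$ something finite — typically by exhibiting a monotone functional along the flow (the relevant dual Orlicz energy, whose time derivative has a sign) — so that $\pd_t h\to 0$ along a subsequence $t_k\to\infty$; by the compactness from the uniform estimates, $h(\cdot,t_k)\to h_\infty$ in $C^\infty$, and the limit satisfies $-f h_\infty/(G(\delbar h_\infty)\var(h_\infty)\det(\nabla^2 h_\infty+h_\infty I)) + h_\infty = 0$, which is exactly \eqref{dOMP-f} with $c=1$. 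The limiting body is uniformly convex and contains the origin in its interior by the uniform $C^0$ and $C^2$ bounds.
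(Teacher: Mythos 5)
Your overall architecture matches the paper's: reduce to the scalar flow for the support function, prove $C^0$ bounds from \eqref{cond-f} at spatial extrema (Lemma \ref{lem3.1}), deduce the gradient bound from $\rho^2=h^2+|\nabla h|^2$ (Corollary \ref{cor3.2}), bound the principal radii above via a parabolic maximum principle on a log-type auxiliary function (Lemma \ref{lem3.4}), handle the extra $\overline\nabla G$ terms coming from $G=G(X)$, invoke Krylov--Safonov/Schauder for $T^*=\infty$, and extract a converging subsequence using the monotone energy $J(M)=\int\widetilde\varphi(h)f\,dx-\int\widetilde G(\rho,u)\,du$ (Lemma \ref{lem4.1}). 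All of this is the paper's argument.

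There is, however, one concrete gap. You propose to obtain the lower bound on $\det W$ ``from the evolution equation at a minimum of $h$ together with the already-established bounds.'' That does not give a uniform bound: at a spatial minimum of $h$ one has $\nabla^2h+hI\ge hI$, hence $\det W\ge h^{n-1}$ \emph{at that point only}, which says nothing about $\det W$ (equivalently $\mathcal K$) elsewhere. In the paper this step is a separate, non-trivial parabolic maximum-principle argument (Lemma \ref{lem3.3}): one applies the maximum principle to
\begin{equation*}
Q(x,t)=\frac{F-h}{h-\varepsilon_0},\qquad F=\frac{f(x)\,\mathcal K\,h}{G(\delbar h)\,\varphi(h)},\qquad 0<\varepsilon_0<\min h,
\end{equation*}
and the key cancellation comes from the term $FQ\bigl(1-\tfrac{h}{h-\varepsilon_0}\bigr)\sum_i b^{ii}$, which for large $Q$ dominates with the correct sign because $1-\tfrac{h}{h-\varepsilon_0}<0$ and $\sum_i b^{ii}\gtrsim \mathcal K^{1/(n-1)}$. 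That computation (or your alternative: an analogous maximum-principle argument directly on $\lambda_{\min}(W)$) cannot be replaced by the static inequality at a minimum of $h$. Once you have both the upper bound on $\lambda_{\max}(W)$ and the upper bound on $\mathcal K$, uniform parabolicity follows as you say. The rest of your proposal (convergence via boundedness of $J$ and $\pd_t J\le0$, passing to the limit along a subsequence) is sound and matches the paper.
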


This paper is organized as follows.
In section 2, we give some basic knowledge about convex bodies and flow \eqref{floweq}.
In section 3, the
long-time existence of  flow $\eqref{floweq}$ will be proved.
  The main difficulties in deriving long-time existence are  to
  obtain uniform positive upper and lower bounds for support function and
  principal curvatures along the flow. By choosing proper auxiliary functions,
  the bounds of principal curvatures are obtained after delicate computations. Then
  long-time existence  follows by standard arguments.
  In section 4, by considering
  a related geometric functional,  which is decreasing along the flow, we can prove that the smooth solution of flow
  \eqref{floweq} can converge for a subsequence to a smooth solution of Eq.
  \eqref{dOMP-f} with $c=1$. This completes the proof of Theorem \ref{thm2}.
  Then Theorem \ref{thm1} follows by a simple approximation.
  At last, we provide a uniqueness result about \eqref{dOMP-f} in a special case.

  \section{Preliminaries}

\subsection{Convex body}

Let $\R^n$ be the $n$-dimensional Euclidean space, and $\uS$ be the unit sphere
in $\R^n$. 
Assume $M$ is a smooth closed uniformly convex hypersurface in $\R^{n}$.
Without loss of generality, we may assume that $M$ encloses the origin.
The support function $h$ of $M$ is defined as
\begin{equation*}
h(x) := \max_{y\in M} \langle y,x \rangle, \quad x\in\uS.
\end{equation*}
And the radial function $\rho$ of $M$ is given by
\begin{equation*}
\rho(u) :=\max\set{\lambda>0 : \lambda u\in M}, \quad u\in\uS.
\end{equation*}
We see that $\rho(u)u\in M$.
Denote by $\nu_M$ the Gauss map of $M$.
Then the radial Gauss map $\alpha_M$ is defined as
\begin{equation*}
\alpha_M(u) := \nu_M(\rho(u)u) \text{ for } u\in\uS.
\end{equation*}
And the reverse radial Gauss map $\alpha_M^*$ is given by
\begin{equation*}
\alpha_M^*(x) :=\set{u\in\uS : \rho(u)u \in \nu_M^{-1}(x)}.
\end{equation*}

For any Borel set $E\subset\uS$ and any bounded integrable function
$g:\uS\to\R$, we have
\begin{equation}
  \label{eq:1}
  \int_{\alpha_M^*(E)} g(u) \dd u
  =\int_{E} g(\alpha_M^*(x)) h(x) \rho(\alpha_M^*(x))^{-n} \det(\nabla^2h+hI)(x) \dd x.
\end{equation}
Note that
\begin{equation*}
\rho(u)u =(\delbar h)(\alpha_M(u)), \quad u\in\uS.
\end{equation*}
Then we easily see that equations \eqref{dOMP} and \eqref{dOMP-f} are equivalent.
One can consult \cite{HLYZ.Acta.216-2016.325} for more details.

\subsection{Curvature flow}

Suppose that $M$ is parametrized by the inverse Gauss map $X :
\mathbb{S}^{n-1}\rightarrow M$,
namely $X(x) =\nu_M^{-1}(x)$.
The support function $h$ of $M$ can be computed by
 \beqn \label{h} h(x) = \langle x, X(x)\rangle, \indent x \in \mathbb{S}^{n-1}.\eeqn
 Here one can see $x$ is the outer normal of $M$ at $X(x)$.
  Let $e_{ij}$ be the standard metric of the sphere $\mathbb{S}^{n-1}$ and $\nabla$ be the gradient on $\mathbb{S}^{n-1}$.
  Differentiating $\eqref{h}$, we have
  \beqs \nabla_{i} h = \langle \nabla_{i}x, X(x)\rangle + \langle x, \nabla_{i}X(x)\rangle.\eeqs
  Since $\nabla_{i}X(x)$ is tangent to $M$ at $X(x)$, we have
  \beqs \nabla_{i} h = \langle \nabla_{i}x, X(x)\rangle. \eeqs
  It follows that
\begin{equation}\label{Xh}  X(x) = \nabla h + hx.\end{equation}

By differentiating $\eqref{h}$ twice, the  second fundamental form $A_{ij}$   of $M$
can be computed in terms of the support function, see for example
\cite{Urb.JDG.33-1991.91},
\beqn \label{A} A_{ij} =  \nabla_{ij}h + he_{ij},\eeqn
where $\nabla_{ij} = \nabla_{i}\nabla_{j}$ denotes the second order covariant derivative with respect to $e_{ij}$.
The  induced metric matix $g_{ij}$ of $M$ can be derived by Weingarten's formula,
\beqn\label{g} e_{ij} = \langle \nabla_{i}x, \nabla_{j}x\rangle  = A_{ik}A_{lj}g^{kl}.  \eeqn
The principal radii of curvature are the eigenvalues of the matrix $b_{ij} =
A^{ik}g_{jk}$.
When considering a smooth local orthonormal frame on $\uS$, by virtue of
\eqref{A} and \eqref{g}, we have
\beqn\label{radii} b_{ij} = A_{ij} = \nabla_{ij}h + h\delta_{ij}.\eeqn
We will use
$b^{ij}$ to denote the inverse matrix of $b_{ij}$.
The Gauss curvature of $X(x) \in M$ is given by
\beqs \mathcal{K}(x) = (\det{(\nabla_{ij}h + h\delta_{ij})})^{-1}.\eeqs

From the evolution equation of $X(x,t)$ in $\eqref{floweq}$, we derive the
evolution equation of the corresponding support function $h(x,t)$:
\begin{equation}\label{seq}
  \frac{\pd h(x,t)}{\pd t}
  = -f(x)\mathcal{K}(x,t)\frac{h}{G(\nabla h + hx)}\frac{1}{\varphi(h)} + h(x,t), \indent x \in \mathbb{S}^{n-1}.
\end{equation}
A self-similar solution of this flow is just equation \eqref{dOMP-f}.

Note that $M$ can be represented by its radial function, namely
\beqs M = \{\rho(u)u: u \in \mathbb{S}^{n-1}\}.\eeqs
It is easy to see that the outer normal vector $\nu = x(u)$ at $\rho(u)u \in M$ is given by
\begin{equation*}
  \nu = \frac{\rho(u)u - \nabla \rho}{\sqrt{\rho^{2} + |\nabla \rho|^{2}}}.
\end{equation*}
And the induced metric and the second fundamental form of $M$ can also be
computed  according to the radial function, see for example
\cite{Ger.CVPDE.49-2014.471}. 

Denote the radial function of $M_t$ by $\rho(u,t)$.
From $\eqref{Xh}$, $u$ and $x$ are related by
\beqn\label{rs} \rho(u)u = \nabla h(x) + h(x)x.
\eeqn
Let $x = x(u,t)$, by $\eqref{rs}$, we have
\beqs \log \rho(u,t) = \log h(x,t) - \log x\cdot u. \eeqs
Differentiating the above identity, we have
\begin{equation*}
\begin{split}
 \frac{1}{\rho(u,t)}\frac{\pd \rho(u,t)}{\pd t}
&= \frac{1}{h(x,t)}(\nabla h\cdot \frac{\pd x(u,t)}{\pd t} + \frac{\pd h(x,t)}{\pd t}) - \frac{1}{u \cdot x}u \cdot \frac{\pd x(u,t)}{\pd t}\\
&= \frac{1}{h(x,t)}\frac{\pd h(x,t)}{\pd t} + \frac{1}{h(x,t)}(\nabla h - \rho(u,t)u)\cdot \frac{\pd x(u,t)}{\pd t}\\
&= \frac{1}{h(x,t)}\frac{\pd h(x,t)}{\pd t}.
\end{split}
\end{equation*}
The evolution equation of  radial function then follows from $\eqref{seq}$,
\begin{equation}\label{req}
\frac{\pd \rho(u,t)}{\pd t} = -f\mathcal{K}(u,t)\frac{\rho}{G(\rho, u)}\frac{1}{\varphi(h)} + \rho(u,t),
\end{equation}
where $\mathcal{K}(u,t)$ denotes the Gauss curvature at $\rho(u,t)u \in M_{t}$ and $f$ takes value at the normal vector $x(u,t) $.
Here and after, we sometimes also write $G(X) =G(\rho,u)$ for convenience.

\section{Long-time existence of the flow }
In this section, we will give a priori estimates about support function and obtain the long-time existence of the flow $\eqref{floweq}$.

We begin with the estimate of the upper and lower bounds of $h$.
\begin{lemma}\label{lem3.1} Let $h$ be a smooth solution of $\eqref{seq}$ on $\mathbb{S}^{n-1} \times [0, T)$, and $f , \var, G$ are smooth functions satisfying $\eqref{cond-f}$,  then
\beqs  \frac{1}{C} \leq  h(x,t) \leq C, \eeqs
where $C$ is a positive constant depending on $\max\limits_{\mathbb{S}^{n-1}}
h(x,0)$, $\min\limits_{\mathbb{S}^{n-1}} h(x,0)$,
$\max\limits_{\mathbb{S}^{n-1}} f(x)$,
$\min\limits_{\mathbb{S}^{n-1}} f(x)$, $ \limsup\limits_{s\to+\infty} \bigl[ \varphi(s) \max\limits_{|y|=s} G(y) s^{n-1} \bigr]$ and $ \liminf\limits_{s\to0^{+}} \bigl[ \varphi(s) \max\limits_{|y|=s} G(y) s^{n-1} \bigr]$.
\end{lemma}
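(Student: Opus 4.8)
The plan is to run a comparison (barrier) argument directly on the scalar evolution equation \eqref{seq}, exploiting the parabolic maximum principle. First I would track the spatial maximum $h_{\max}(t) := \max_{x\in\uS} h(x,t)$ and spatial minimum $h_{\min}(t) := \min_{x\in\uS} h(x,t)$ as Lipschitz functions of $t$, and differentiate them via Hamilton's trick: at a point $x_t$ where the maximum is attained, $\nabla h = 0$ and $\nabla^2 h \le 0$, so $b_{ij} = \nabla_{ij}h + h\delta_{ij} \le h\,\delta_{ij}$, hence $\mathcal{K} = (\det b_{ij})^{-1} \ge h^{-(n-1)}$. At such a point $\delbar h = \nabla h + hx = h_{\max}(t)\, x_t$, so $|\delbar h| = h_{\max}(t)$ and $G(\delbar h) = G(h_{\max} x_t)$ with $|h_{\max} x_t| = h_{\max}$. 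Plugging into \eqref{seq},
\[
  \frac{d}{dt} h_{\max}(t)
  \le -\,\frac{\min_{\uS} f}{\max_{|y|=h_{\max}} G(y)}\cdot \frac{h_{\max}}{\varphi(h_{\max})}\cdot h_{\max}^{-(n-1)} + h_{\max}
  = h_{\max}\Bigl[\,1 - \frac{\min_\uS f}{\varphi(h_{\max})\,\max_{|y|=h_{\max}} G(y)\, h_{\max}^{n-1}}\Bigr].
\]
By the left-hand inequality in \eqref{cond-f}, $\limsup_{s\to\infty}\bigl[\varphi(s)\max_{|y|=s}G(y)s^{n-1}\bigr] < \min_\uS f$, so there is $s_1$ such that for all $s \ge s_1$ the bracket $\bigl[\cdots\bigr]$ is bounded above by a negative constant; hence whenever $h_{\max}(t) \ge \max\{s_1,\ h_{\max}(0)\}$ the derivative is negative, giving the uniform upper bound $h \le C$ with $C$ depending only on $h_{\max}(0)$, $\min_\uS f$, and $\limsup_{s\to\infty}\bigl[\varphi(s)\max_{|y|=s}G(y)s^{n-1}\bigr]$.

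For the lower bound I would argue symmetrically at the spatial minimum $x_t$ of $h(\cdot,t)$: there $\nabla h = 0$, $\nabla^2 h \ge 0$, so $b_{ij} \ge h\,\delta_{ij}$ and $\mathcal{K} \le h^{-(n-1)}$, while again $|\delbar h| = h_{\min}$ and $G(\delbar h) = G(h_{\min}x_t)$. This yields
\[
  \frac{d}{dt} h_{\min}(t)
  \ge h_{\min}\Bigl[\,1 - \frac{\max_\uS f}{\varphi(h_{\min})\,\min_{|y|=h_{\min}} G(y)\, h_{\min}^{n-1}}\Bigr],
\]
and the right-hand inequality in \eqref{cond-f}, namely $\max_\uS f < \liminf_{s\to 0^+}\bigl[\varphi(s)\min_{|y|=s}G(y)s^{n-1}\bigr]$, guarantees $s_0 > 0$ such that the bracket is bounded below by a positive constant whenever $h_{\min}(t) \le s_0$. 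Therefore $h_{\min}(t)$ cannot drop below $\min\{s_0,\ h_{\min}(0)\}$, giving $h \ge 1/C$. Combining, $1/C \le h \le C$ on $\uS\times[0,T)$ with $C$ as stated in the lemma.

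The main technical point to be careful about — the step I expect to require the most attention — is the non-smoothness of $t\mapsto h_{\max}(t)$ and $t\mapsto h_{\min}(t)$: one must justify the differential inequalities in the sense of upper/lower Dini derivatives (Hamilton's lemma on the derivative of a max of a smooth family) rather than pretending these envelope functions are $C^1$, and then invoke an ODE comparison principle valid for Dini derivatives. A secondary subtlety is that at the extremal point the value $h_{\max}(t)$ (resp. $h_{\min}(t)$) enters $G$ through $G(h_{\max}(t)\,x_t)$, where $x_t$ varies; this is handled cleanly by bounding $G(h_{\max}x_t)$ above by $\max_{|y|=h_{\max}}G(y)$ (resp. below by $\min_{|y|=h_{\min}}G(y)$), which is exactly why the hypothesis \eqref{cond-f} is phrased with these max/min over spheres. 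Everything else — the curvature inequalities at critical points, continuity of all quantities so that the "first time $h_{\max}$ reaches $s_1$" argument makes sense — is routine once the comparison framework is set up.
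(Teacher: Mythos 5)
Your proposal is correct and follows essentially the same strategy as the paper: evaluate the scalar evolution equation \eqref{seq} at a spatial extremum of $h$, use $\nabla h=0$, $\rho=h$, and the resulting bound $\mathcal{K}\gtrless h^{-(n-1)}$ together with condition \eqref{cond-f} to show the max (resp.\ min) cannot escape to infinity (resp.\ zero). In fact you are slightly more careful than the paper in two respects: you explicitly invoke Hamilton's trick to justify the Dini-derivative inequalities, and you correctly use $\min_{|y|=s}G(y)$ in the lower-bound estimate, whereas the paper's text (and the lemma statement) contain a typo writing $\max_{|y|=s}G(y)$ where condition \eqref{cond-f} clearly calls for $\min$.
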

\begin{proof}
Suppose $\max\limits_{\mathbb{S}^{n-1}}h(x,t)$ is attained at $x^{0} \in \mathbb{S}^{n-1}$, then at $x^{0}$, we have
\begin{equation*} \nabla h =0,\indent~~~~\nabla^{2} h \leq 0, \indent \rho = h,
\end{equation*}
and $$ \nabla^{2} h + hI \leq hI.$$
So at $x^{0}$,
\begin{equation*}
\begin{split}
\frac{\pd h}{\pd t}
& =  -f(x)\mathcal{K}(x)\frac{h}{G(\rho, u)}\frac{1}{\varphi(h)} + h\\
& \leq -f(x)\frac{1}{G(h, u)h^{n-1}}\frac{h}{\varphi(h)} + h\\
& = \frac{1}{G(h, u)h^{n-2}\varphi(h)}(G(h, u)h^{n-1}\varphi(h) - f(x))
\end{split}
\end{equation*}
Let $ \overline{A} = \limsup\limits_{s\to+\infty} \bigl[ \varphi(s)
\max\limits_{|y|=s} G(y) s^{n-1} \bigr]$. By $\eqref{cond-f}$, $\varepsilon =
\frac{1}{2}(\min\limits_{\mathbb{S}^{n-1}} f(x) - \overline{A})$ is positive and
there exists a positive constant  $C_{1} > 0 $ such that $$ G(h, u)h^{n-1}\varphi(h) < \overline{A} + \varepsilon $$ for $ h > C_{1}. $ It follows  that
\beqs G(h, u)h^{n-1}\varphi(h)  - f(x)  < \overline{A} + \varepsilon   - \min_{\mathbb{S}^{n-1}} f(x) < 0,\eeqs
from which we have \beqs \frac{\pd h}{\pd t} <0 \eeqs at maximal points.
So \beqs h \leq \max\{C_{1}, \max_{\mathbb{S}^{n-1}} h(x,0)\}.\eeqs

Similarly, we can estimate $\min\limits_{\mathbb{S}^{n-1}}h(x,t)$.
Suppose $\min_{\mathbb{S}^{n-1}}h(x,t)$ is attained at $x^{1}$, then at this point,
\begin{equation*} \nabla h =0,\indent~~~~\nabla^{2} h \geq 0, \indent \rho = h,
\end{equation*}
and $$ \nabla^{2} h + hI \geq hI.$$
And at $x^{1}$,
\begin{equation*}
\begin{split}
\frac{\pd h}{\pd t}
& =  -f(x)\mathcal{K}(x)\frac{h}{G(\rho, u)}\frac{1}{\varphi(h)} + h\\
& \geq -f(x)\frac{1}{G(h, u)h^{n-1}}\frac{h}{\varphi(h)} + h\\
& = \frac{1}{G(h, u)h^{n-2}\varphi(h)}(G(h, u)h^{n-1}\varphi(h) - f(x)).
\end{split}
\end{equation*}
Let $ \underline{A} = \liminf\limits_{s\to0^{+}} \bigl[ \varphi(s)
\max\limits_{|y|=s} G(y) s^{n-1} \bigr]$. By $\eqref{cond-f}$, $\varepsilon =
\frac{1}{2}(\underline{A} - \max\limits_{\mathbb{S}^{n-1}} f(x))$ is positive
and   there exists some positive constant  $C_{2} > 0 $ such that $$ G(h, u)h^{n-1}\varphi(h) > \underline{A}  - \varepsilon $$ for $ h < C_{2}. $ It follows  by $\eqref{cond-f}$
\beqs G(h, u)h^{n-1}\varphi(h)  - f(x)  > \underline{A}  - \varepsilon   - \max_{\mathbb{S}^{n-1}} f(x) > 0,\eeqs
which shows that \beqs \frac{\pd h}{\pd t} > 0 \eeqs at minimal points. Hence
\beqs h \geq \min\{C_{2}, \min_{\mathbb{S}^{n-1}} h(x,0)\}.\eeqs
The proof of the lemma is completed.
\end{proof}

\begin{corollary}\label{cor3.2} Let $h$ be a smooth solution of $\eqref{seq}$ on $\mathbb{S}^{n-1} \times [0, T)$, and $f , \var, G$ are smooth functions satisfying $\eqref{cond-f}$. Then  we have
\begin{equation}
\begin{split}
|\nabla h(x,t)| \leq C, \quad \forall (x,t) \in \mathbb{S}^{n-1} \times [0, T),\\
\end{split}
\end{equation}
 where $C$ is the same positive constant as in Lemma \ref{lem3.1}.
\end{corollary}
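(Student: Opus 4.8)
The goal is to bound $|\nabla h|$ on $\uS\times[0,T)$ using only the two-sided bound on $h$ from Lemma \ref{lem3.1}. The plan is to exploit the geometric relation between the support function and the radial function rather than to do any PDE estimate. Recall from \eqref{rs} that if $x\in\uS$ is the outer normal at the boundary point $X=\nabla h(x)+h(x)x$, and if $u\in\uS$ is the radial direction of that same boundary point, then $\rho(u)u=\nabla h(x)+h(x)x$. Taking inner products, $h(x)=\langle x,X\rangle=\rho(u)\langle x,u\rangle$, and taking the squared norm, $|X|^2=|\nabla h(x)|^2+h(x)^2=\rho(u)^2$. Hence $|\nabla h(x)|^2=\rho(u)^2-h(x)^2$.

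So the first step is to record these identities, which are immediate from \eqref{Xh} and the orthogonality of $\nabla h(x)$ to $x$. The second step is to bound $\rho$ in terms of the same constant $C$ as in Lemma \ref{lem3.1}. This is the standard comparison between support and radial functions of a convex body containing the origin in its interior: for every $u\in\uS$ one has $\min_x h(x)\le \rho(u)\le \max_x h(x)$. Indeed, the inner ball of radius $\min_x h(x)$ centered at the origin is contained in $M_t$, giving the lower bound $\rho(u)\ge\min_x h(x)$; and the slab $\{y:\langle y,x\rangle\le h(x)\}$ for each $x$ forces any boundary point, in particular $\rho(u)u$, to satisfy $\langle \rho(u)u,x\rangle\le h(x)$, and choosing $x=u$ gives $\rho(u)\le h(u)\le\max_x h(x)$. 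Combining with Lemma \ref{lem3.1}, $\tfrac1C\le\rho(u,t)\le C$ uniformly in $t$.

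The third step simply assembles these: $|\nabla h(x,t)|^2=\rho(u,t)^2-h(x,t)^2\le \rho(u,t)^2\le C^2$, so $|\nabla h(x,t)|\le C$, and after relabeling the constant (still depending only on the quantities listed in Lemma \ref{lem3.1}) the corollary follows. There is essentially no obstacle here; the only thing to be careful about is the direction of the correspondence $x\leftrightarrow u$ — one should phrase it as: fix $(x,t)$, let $X(x,t)=\nabla h+hx$ be the corresponding boundary point of $M_t$, let $\rho(\cdot,t)$ be the radial function of $M_t$, and let $u=X/|X|$ be the radial direction of $X$; then $|X|=\rho(u,t)$ and the estimate above applies. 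Since $M_t$ is uniformly convex and encloses the origin for $t<T$ (so $h>0$, and the origin is interior by Lemma \ref{lem3.1}), all of these are well defined.
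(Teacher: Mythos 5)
Your proof is correct and is essentially identical to the paper's: both use the identity $\rho^2 = h^2 + |\nabla h|^2$ together with the comparison $\min_{\uS} h \le \rho \le \max_{\uS} h$ and then invoke Lemma \ref{lem3.1}. You simply spell out the justifications (orthogonality of $\nabla h$ to $x$, the inner-ball/slab argument) that the paper leaves implicit.
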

\begin{proof}
By $\eqref{rs}$, we know that
  \beqs \min_{\mathbb{S}^{n-1}} h(x,t) \leq \rho(u,t) \leq
  \max_{\mathbb{S}^{n-1}} h(x,t),\eeqs
and \begin{equation*}
\begin{split}
\rho^{2}  = h^{2} +  |\nabla h|^{2}.
\end{split}
\end{equation*}
These facts together with Lemma \ref{lem3.1} imply the result.
\end{proof}

 Next we will establish the uniform upper and lower bounds for the principal
 curvatures of  the flow $\eqref{floweq}$. We first derive an upper bound for
 the Gauss curvature. For convenience, we write
 \begin{equation*}
 F = f(x)\mathcal{K}(x)\frac{h}{G(\nabla h + hx)}\frac{1}{\varphi(h)}. 
 \end{equation*}
 In this section, we take a local orthonormal frame $\{e_{1}, \cdots, e_{n-1}\}$ on $\mathbb{S}^{n-1} $ such that the standard metric on $\mathbb{S}^{n-1} $ is $\{\delta_{ij}\}$. The local coordinate of a point $x$ in $\mathbb{S}^{n-1}$ is denoted by $x = (x_{1}, \cdots, x_{n-1})$. Double indices
 always mean to sum from $1$ to $n-1$.

By Lemma \ref{lem3.1} and Corollary \ref{cor3.2}, if  $h$ is a smooth
   solution of $\eqref{seq}$ on $\mathbb{S}^{n-1} \times [0, T)$, and $f , \var,
   G$ are smooth functions satisfying $\eqref{cond-f}$, then along the flow for
   $[0, T)$, $\nabla h + hx$ and $h$ are  smooth functions whose ranges are within
   some bounded domain $\Omega_{[0, T)}$  and bounded interval $I_{[0, T)}$
   respectively. 
   Here $\Omega_{[0, T)}$ and $I_{[0, T)}$ depend only on the
   upper and lower bounds of $h$  on $[0, T)$.

\begin{lemma}\label{lem3.3}
Let $h$ be a smooth solution of $\eqref{seq}$ on $\mathbb{S}^{n-1} \times [0, T)$, and $f , \var, G$ are smooth functions satisfying $\eqref{cond-f}$. Then on $\mathbb{S}^{n-1} \times [0, T)$,
\beqs  \mathcal{K}(x,t) \leq C,\eeqs
  where $C$ is a positive constant depending on
  $\|f\|_{C^{0}(\mathbb{S}^{n-1})}$, $\|\var\|_{C^{1}(I_{[0, T)})}$,
  $\|G\|_{C^{1}(\Omega_{[0, T)})}$ and $\|h\|_{C^{1}(\mathbb{S}^{n-1} \times [0, T))}$.
\end{lemma}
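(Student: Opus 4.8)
The plan is to obtain an upper bound for the Gauss curvature $\mathcal{K}$, equivalently a positive lower bound for $\det b_{ij} = \det(\nabla^2 h + hI)$, by applying the maximum principle to a suitably chosen auxiliary function. Since $\mathcal{K} = 1/\det b_{ij}$ and $F = f\mathcal{K}\, h/(G\varphi(h))$, controlling $\mathcal{K}$ from above amounts to controlling $F$ from above; a natural choice is to consider the quantity $F$ itself, or better $\log F$, possibly multiplied by an auxiliary factor depending on $h$ to absorb bad terms. First I would compute the evolution equation $\pd_t h = -F + h$ from \eqref{seq} and then differentiate to get the evolution of $F$. Using the standard formulas, $\pd_t b_{ij} = \nabla_{ij}(\pd_t h) + (\pd_t h)\delta_{ij} = -\nabla_{ij}F - F\delta_{ij} + b_{ij}$, and since $\pd_t \log\det b_{ij} = b^{ij}\pd_t b_{ij}$, one gets a parabolic equation for $\log\mathcal{K}$ and hence for $F$ of the form $\pd_t F = -F\, b^{ij}\nabla_{ij}F + (\text{lower order})$, where the linearized operator $\mathcal{L} := F b^{ij}\nabla_{ij}$ is degenerate elliptic (since $b_{ij}>0$ along the flow by convexity).

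Next I would examine the spatial maximum of $F$ (or $\log F$) at a time $t_0$ and point $x_0$. At such a point $\nabla F = 0$ and $\nabla^2 F \le 0$, so the term $-F b^{ij}\nabla_{ij}F \ge 0$ is favorable, and the task reduces to showing that the remaining terms — which involve $\nabla h$, $\nabla^2 h = b_{ij} - h\delta_{ij}$, the first derivatives of $f$, $\varphi$, $G$, and the ratio $h/(G\varphi)$ — are bounded above by a constant times a controlled power of $F$ (ideally by a constant, or at worst linearly in $F$ with a coefficient that lets the maximum principle close). This is where Lemma \ref{lem3.1} and Corollary \ref{cor3.2} enter decisively: they give $1/C \le h \le C$ and $|\nabla h| \le C$ along $[0,T)$, which confines $\nabla h + hx$ to a fixed bounded domain $\Omega_{[0,T)}$ away from the origin and $h$ to a fixed interval $I_{[0,T)}$, so that $f$, $1/\varphi(h)$, $h$, $1/G(\nabla h + hx)$ and the first-order derivatives $\varphi'$, $DG$ are all uniformly bounded in terms of $\|f\|_{C^0}$, $\|\varphi\|_{C^1(I_{[0,T)})}$, $\|G\|_{C^1(\Omega_{[0,T)})}$ and $\|h\|_{C^1}$, exactly the dependence claimed in the statement. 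The terms where $b_{ij}$ appears with a negative sign (coming from $\nabla^2 h = b - hI$ inside the logarithmic derivative and from the $+b_{ij}$ in $\pd_t b_{ij}$) should produce a term like $-(n-1)F$ plus $b^{ij}h\delta_{ij} \cdot(\dots)$, which can be used to dominate the positive error terms once $F$ is large.

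The main obstacle I anticipate is handling the dependence of $G$ on the full vector $X = \nabla h + hx$ rather than just on $|X|$ — the authors themselves flag this in the introduction. Differentiating $G(\nabla h + hx)$ spatially brings in $\langle DG, \nabla_k(\nabla h + hx)\rangle = \langle DG, \nabla_k\nabla h + (\nabla_k h)x + h e_k\rangle$, which contains the third-order-looking term $\nabla_{ijk}h$ only after one more differentiation, but at first order it produces terms linear in $b_{ij}$; at the maximum of $F$ these must be paired carefully with the vanishing-gradient condition $\nabla F = 0$ to substitute for $\nabla b_{ij}$-type quantities. The bookkeeping is delicate because $G$ is only assumed $C^1$, so one cannot use second derivatives of $G$; fortunately for the $\mathcal{K}$-upper-bound only first derivatives of $F$ are differentiated once more, so only $\nabla(\log G)$, i.e. $DG$, appears, keeping us within $C^1(\Omega_{[0,T)})$. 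I would therefore carefully expand $\nabla_{ij}F$, collect all terms, discard the non-positive $\mathcal{L}F$ contribution, use $\nabla F=0$ to eliminate cross terms, and bound what remains using Lemma \ref{lem3.1}, Corollary \ref{cor3.2} and the $C^1$ bounds on $f,\varphi,G$, concluding that at an interior maximum $F$ cannot exceed a constant of the stated form; combined with the bound on $F$ at $t=0$, this yields $F \le C$ on $\mathbb{S}^{n-1}\times[0,T)$ and hence $\mathcal{K}\le C$.
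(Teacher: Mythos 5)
Your overall plan — write the flow as $\partial_t h = -F + h$, apply the maximum principle to an auxiliary quantity, and use Lemma \ref{lem3.1} and Corollary \ref{cor3.2} to confine the arguments of $f$, $\varphi$, $G$ to compact sets so that only $C^1$ norms of $\varphi$ and $G$ enter — is the right framework and matches the paper's dependence structure. Your observation about why only $\bar\nabla G$ (and not $\bar\nabla^2 G$) is needed for the curvature upper bound is also correct. But the proposal has a genuine gap at precisely the step you leave vague: the choice of auxiliary function.

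If you apply the maximum principle to $F$ or $\log F$ alone, the argument does not close. Differentiating $F = f\mathcal{K}\,h/(G\varphi)$ and using $\partial_t b_{ij} = -\nabla_{ij}F - F\delta_{ij} + b_{ij}$ and $\partial\mathcal{K}/\partial b_{ij} = -\mathcal{K}b^{ij}$, one finds
\begin{equation*}
\partial_t F - F b^{ij}\nabla_{ij}F = -(n-1)F + F^2\sum_i b^{ii} + \text{(lower order)}.
\end{equation*}
The term $F^2\sum_i b^{ii}$ is \emph{positive} and, since $\sum_i b^{ii}\geq(n-1)\mathcal{K}^{1/(n-1)}$, grows like $F^{2+\frac{1}{n-1}}$. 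The term $-(n-1)F$, which you propose to use to ``dominate the positive error terms once $F$ is large,'' is only linear in $F$ and cannot do so. There is no negative $\sum_i b^{ii}$ term in the evolution of $F$ itself; such a term can only be manufactured by the auxiliary factor.

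The paper uses $Q = (F - h)/(h - \varepsilon_0)$ with a \emph{strictly positive} constant $\varepsilon_0 < \min h$. Tracking the $\sum_i b^{ii}$ coefficient in the evolution of $Q$, the positive $F^2/(h-\varepsilon_0)$ contribution is cancelled by the $-F(Q+1)h/(h-\varepsilon_0)$ contribution coming from the extra terms in $\nabla_{ij}Q$, leaving the net coefficient $-\varepsilon_0 FQ/(h-\varepsilon_0) < 0$; only then does the favourable term $-\varepsilon_0 FQ\sum_i b^{ii}/(h-\varepsilon_0)\lesssim -Q^{2+\frac{1}{n-1}}$ dominate the $Q$ and $Q^2$ error terms. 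The choice $\varepsilon_0 = 0$ (i.e.\ $F/h$, which is arguably the most ``natural'' normalisation given $\partial_t h = -F + h$) makes this coefficient vanish identically, and the argument again fails. More generally, writing the factor as $\psi(h)$, the net $\sum_i b^{ii}$ coefficient is $F\bigl(1 + h\psi'(h)/\psi(h)\bigr)$, so one needs $\psi$ to decrease strictly faster than $1/h$ (for instance $\psi = 1/(h-\varepsilon_0)$ with $\varepsilon_0>0$, or $\psi = h^{-\beta}$ with $\beta>1$). Your proposal says ``possibly multiplied by an auxiliary factor depending on $h$ to absorb bad terms'' without identifying the bad term or the required decay rate; since the two obvious choices ($F$ alone and $F/h$) both fail, this is not a detail that can be deferred — it is the heart of the lemma and needs to be spelled out.
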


\begin{proof}
Consider the following auxiliary function   \beqs Q(x,t)  = \frac{1}{h- \varepsilon _{0}}(F - h),\eeqs
where $ \varepsilon _{0}$ is a positive constant satisfying
\beqs \varepsilon _{0} < \min_{\mathbb{S}^{n-1} \times [0, T)} h(x,t).\eeqs
Recall that $F = \frac{f(x)\mathcal{K}(x)h}{G(\nabla h + hx)\varphi(h)}$, then by Lemma \ref{lem3.1} and Corollary \ref{cor3.2} , the upper bound of $\mathcal{K}(x,t) $  follows from that of $ Q(x,t)$.
Hence we only need to derive the upper bound of $ Q(x,t)$. First we   compute
the evolution equation of  $ Q(x,t)$. Note that
\begin{equation*}
\begin{split}
\nabla_{i} Q
&= \frac{F_{i} - h_{i}}{h- \varepsilon _{0}} - \frac{F-h}{(h- \varepsilon _{0})^{2}}h_{i},\\
 \nabla_{ij} Q
&= \frac{F_{ij} -h_{ij}  }{h- \varepsilon _{0}} - \frac{(F_{i} - h_{i}) h_{j}}{(h- \varepsilon _{0})^{2}} - \frac{(F_{j} - h_{j})h_{i} + (F-h)h_{ij}}{(h- \varepsilon _{0})^{2}} + 2\frac{(F-h)h_{i}h_{j}}{(h- \varepsilon _{0})^{3}}\\
& = \frac{F_{ij} -h_{ij}  }{h- \varepsilon _{0}} - \frac{(F-h)h_{ij}}{(h- \varepsilon _{0})^{2}} - \frac{\nabla_{i}Q h_{j}}{h- \varepsilon _{0}}
- \frac{\nabla_{j}Q h_{i}}{h- \varepsilon _{0}},
\end{split}
\end{equation*}
 and
\beqs \frac{\pd Q}{\pd t} = \frac{F_{t} - h_{t}}{h- \varepsilon _{0}}  + \frac{h_{t}^{2}}{(h- \varepsilon _{0})^{2}}= \frac{F_{t} }{h- \varepsilon _{0}} + Q + Q^{2}.
\eeqs
 The evolution equation of $ Q(x,t)$ is given by
\begin{equation*}
\begin{split}
\frac{\pd Q}{\pd t} - Fb^{ij} \nabla_{ij} Q
&=  \frac{1}{h- \varepsilon _{0}}(F_{t} -Fb^{ij}F_{ij}) + Q + Q^{2}  \\
&\hskip1.1em + (Q+1)\frac{Fb^{ij}h_{ij}}{h- \varepsilon _{0}} +  \frac{ \nabla_{i} Q F{b^{ij}h_{j}}}{h- \varepsilon _{0}} +  \frac{ \nabla_{j} Q F{b^{ij}h_{i}}}{h- \varepsilon _{0}}.
\end{split}
\end{equation*}
Now, we need to compute the evolution equation of $F$.
\begin{equation*}
\begin{split}
F_{t}
&= \frac{hf(x)}{G(\nabla h + hx)\varphi(h)}\frac{\pd \mathcal{K} }{\pd t} +  \mathcal{K}(x,t)f(x)\frac{\pd }{\pd t}(\frac{h}{G(\nabla h + hx)\varphi(h)})\\
&= -Fb^{ij}(h_{ij} + \delta_{ij}h)_{t}  + \mathcal{K}(x,t)f(x)\frac{\pd }{\pd t}(\frac{h}{G(\nabla h + hx)\varphi(h)})\\
&= -Fb^{ij}(h _{t})_{ij} - Fb^{ij}\delta_{ij}h_{t} + \mathcal{K}(x,t)f(x)\frac{\pd }{\pd t}(\frac{h}{G(\nabla h + hx)\varphi(h)})\\
&= -Fb^{ij}(-F + h)_{ij}  - Fb^{ij}\delta_{ij}h_{t} + \mathcal{K}(x,t)f(x)\frac{\pd }{\pd t}(\frac{h}{G(\nabla h + hx)\varphi(h)})\\
&= Fb^{ij}F_{ij} - Fb^{ij}b_{ij} +  F^{2}b^{ij}\delta_{ij} + \mathcal{K}(x,t)f(x)\frac{\pd }{\pd t}(\frac{h}{G(\nabla h + hx)\varphi(h)}),
\end{split}
\end{equation*}
where we have used the fact
\beqn\label{dK}\frac{\pd \mathcal{K}}{\pd b_{ij}} = -\mathcal{K}b^{ij}.\eeqn

Therefore we have
\beqs \frac{\pd F}{\pd t}  -Fb^{ij}\nabla_{ij}F= - F(n-1) +  F^{2}b^{ij}\delta_{ij}+ \mathcal{K}(x,t)f(x)\frac{\pd }{\pd t}(\frac{h}{G(\nabla h + hx)\varphi(h)}).\eeqs
 At a spatial maximal point of $Q(x,t)$, if we take an orthonormal frame such that $b_{ij}$ is diagonal, we have
\begin{equation*}
\begin{split}
 \frac{\pd Q}{\pd t}  & -  b^{ii}F\nabla_{ii}Q \\
&\leq   \frac{1}{h- \varepsilon _{0}}(F_{t} - b^{ii}F\nabla_{ii}F) + Q + Q^{2}
+  \frac{Fb^{ii}h_{ii}}{h- \varepsilon _{0}} +  \frac{(F-h)F{b^{ii}h_{ii}}}{(h- \varepsilon _{0})^{2}}\\
& =  \frac{1}{h- \varepsilon _{0}}(- Fb^{ii}b_{ii}
+ F^{2}b^{ii}\delta_{ii}+ \mathcal{K}(x,t)f(x)\frac{\pd }{\pd t}(\frac{h}{G(\nabla h + hx)\varphi(h)}))  +  Q + Q^{2} \\
&\hskip1.1em +  \frac{Fb^{ii}(b_{ii} - h\delta_{ii})}{h- \varepsilon _{0}} +  \frac{QF{b^{ii}(b_{ii} - h\delta_{ii})}}{h- \varepsilon _{0}}\\
& =  \frac{F^{2}}{h- \varepsilon _{0}}\sum_{i}{b^{ii}}+ Q + Q^{2} + \frac{1}{h- \varepsilon _{0}}\mathcal{K}(x,t)f(x)\frac{\pd }{\pd t}(\frac{h}{G(\nabla h + hx)\varphi(h)})\\
&\hskip1.1em - \frac{hF}{h- \varepsilon _{0}}\sum_{i}{b^{ii}} + \frac{QF(n-1)}{h- \varepsilon _{0}}  - \frac{QFh}{h- \varepsilon _{0}}\sum_{i}{b^{ii}}\\
& \leq  FQ(1- \frac{h}{h- \varepsilon _{0}})\sum_{i}{b^{ii}} + C_{1}Q +  C_{2} Q^{2}  \\
&\hskip1.1em + \frac{1}{h- \varepsilon _{0}}\mathcal{K}(x,t)f(x)\frac{\pd }{\pd t}(\frac{h}{G(\nabla h + hx)\varphi(h)}).
\end{split}
\end{equation*}
From the evolution equation of $\eqref{floweq}$, we have
\begin{equation*}
  \begin{split}
    \frac{\pd G}{\pd t}
    &= \langle \overline{\nabla}G, \pd _{t} X \rangle \\
    &= \langle \overline{\nabla}G, -F\nu + X \rangle \\
    &= -F\langle \overline{\nabla}G,  \nu \rangle  + \langle \overline{\nabla}G,  X \rangle, 
  \end{split}
\end{equation*}
where $\overline{\nabla}$ is the standard metric on $\R^{n}$.

Now, we have
\begin{equation*}
\begin{split}
\frac{\pd }{\pd t}(\frac{h}{G(\nabla h + hx)\varphi(h)}) 
&= \frac{1}{G\varphi} h_{t} - \frac{h}{G^{2}\varphi^{2}}
(\varphi \frac{\pd G}{\pd t} + G \frac{\pd \varphi}{\pd t} )\\
&= \frac{1}{G\varphi} h_{t} - \frac{h\varphi'}{G\varphi^{2}} h_{t} - \frac{h}{G^{2}\varphi} \frac{\pd G}{\pd t} \\
&\leq  cQ(h- \varepsilon _{0}) + c,
\end{split}
\end{equation*}
where  $\varphi' $   denotes $ \frac{\pd \var(s)}{\pd s}$, and $c$ is a positive
constant depending on $\|\var\|_{C^{1}(I_{[0, T)})}$, $\|G\|_{C^{1}(\Omega_{[0, T)})}$ and
$\|h\|_{C^{1}(\mathbb{S}^{n-1} \times [0, T))}$.

For a $Q$ large enough, by Lemma \ref{lem3.1} and Corollary \ref{cor3.2},
it is easy to see
$$ \frac{1}{C_{0}}\mathcal{K}\leq Q \leq C_{0}\mathcal{K},$$
and
\beqs \sum_{i}{b^{ii}} \geq (n-1) \mathcal{K}(x)^{\frac{1}{n-1}}.\eeqs
Hence if we take $ \varepsilon _{0} $ as a positive constant satisfying $ \varepsilon _{0} <  \min_{S^{n-1} \times (0, +\infty)}h(x,t),$ then
for $Q$ large, \beqs \frac{\pd Q}{\pd t}\leq C_{1}Q^{2} (C_{2} - \varepsilon _{0}Q^{\frac{1}{n-1}}) < 0.\eeqs
Now the upper bound of $\mathcal{K}(x,t)$ follows.
\end{proof}

Next we will estimate the lower bound of principal curvatures. It is equivalent
to estimate the upper bound  of the eigenvalues of matrix $\{b_{ij}\}$ w.r.t. $\{\delta_{ij}\}$.

\begin{lemma}\label{lem3.4} Let $h$ be a smooth solution of $\eqref{seq}$ on $\mathbb{S}^{n-1} \times [0, T)$, and $f , \var, G$ are smooth functions satisfying $\eqref{cond-f}$, then the principal curvature $\kappa_{i}$ for $ i = 1, \cdots, n-1$ satisfies
\beqs  \kappa_{i} \geq C,\eeqs
where $C$ is a positive constant depending on  $\|f\|_{C^{2}(\mathbb{S}^{n-1})}$, $\|\var\|_{C^{2}(I_{[0, T)})}$, $\|G\|_{C^{2}(\Omega_{[0, T)})}$ and $\|h\|_{C^{1}(\mathbb{S}^{n-1} \times [0, T))}$.
\end{lemma}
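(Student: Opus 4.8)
The plan is to bound the largest principal curvature $\kappa_{\max}$ — equivalently the largest eigenvalue of $b_{ij}=\nabla_{ij}h+h\delta_{ij}$ — by a maximum-principle argument applied to a suitable auxiliary function. The natural candidate, following the standard technique for Gauss-curvature-type flows, is
\beqs
  W(x,t) = \log \lambda_{\max}(b_{ij}) - A\log h + B|\nabla h|^2
\eeqs
or, to avoid differentiability issues with the top eigenvalue, the equivalent tensorial version: at the point and time where the maximum is attained, diagonalize $b_{ij}$ and study $w = \log b_{11} - A\log h + B|\nabla h|^2$ where $\kappa_{\max}^{-1}=b_{11}$ is the largest principal radius. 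Here $A,B$ are large constants to be chosen, and $\varepsilon_0$-type lower bounds on $h$ from Lemma \ref{lem3.1}, together with the bounds on $|\nabla h|$ (Corollary \ref{cor3.2}) and on $\mathcal{K}$ (Lemma \ref{lem3.3}), will be used freely. Note $\mathcal{K}\le C$ means $\det b_{ij}\ge 1/C$, so a lower bound on each $\kappa_i$ is equivalent to an upper bound on $b_{11}$.

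The key steps, in order: (1) derive the evolution equation $\pd_t b_{ij} - F b^{kl}\nabla_{kl} b_{ij} = \cdots$ from \eqref{seq}, where the right-hand side contains the curvature terms $F b^{kl}b_{km}b^{mn}b_{ln}\cdot(\text{stuff})$ plus derivatives of $F$ in the $e_i$ directions; the crucial structural fact is that the ``bad'' terms from differentiating $\mathcal{K}$ twice come with a favorable sign, and that $F$ is, up to the controlled factor $f h/(G\varphi)$, equal to $\mathcal{K}=(\det b)^{-1}$. (2) At an interior spatial maximum of $w$, use $\nabla_i w=0$ to substitute $\nabla_i b_{11}$ in terms of $\nabla_i h$ and $\nabla_i(|\nabla h|^2)$, and use $\nabla_{ii} w \le 0$. (3) Handle the derivatives of $F$: since $F = f(x)\mathcal{K}\, h\, G(\nabla h+hx)^{-1}\varphi(h)^{-1}$, first and second covariant derivatives of $F$ produce terms involving $\nabla f$, $\nabla^2 f$, $\delbar G$, $\delbar^2 G$, $\varphi'$, $\varphi''$ — all bounded by hypothesis on the bounded sets $\Omega_{[0,T)}$, $I_{[0,T)}$ — together with terms involving $\nabla b_{ij}$ (through $\nabla\mathcal{K} = -\mathcal{K}b^{ij}\nabla b_{ij}$) and third derivatives of $h$ that must be absorbed. (4) The term $B|\nabla h|^2$ is inserted precisely to produce, via $\pd_t|\nabla h|^2 - Fb^{ij}\nabla_{ij}|\nabla h|^2$, a good negative term $-cBFb^{ij}\nabla_i h\nabla_j h$ or a term $\sim B\,Fb^{ii}$ that dominates the cross terms linear in $\nabla b_{11}$ via Cauchy–Schwarz; the term $-A\log h$ contributes $A F b^{ii}\cdot\frac{something}{h}$ with a sign that, for $A$ large relative to $B$, beats the remaining zero-order bad terms like $F^2 b^{ii}b^{11}$ coming from the $F^2 b^{ij}\delta_{ij}$ piece. (5) Conclude that if $b_{11}$ is large then $\pd_t w < 0$ at the maximum, hence $b_{11}$ stays bounded on $[0,T)$ by its initial value and the constants, giving the claimed lower bound $\kappa_i\ge C$.

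The main obstacle I expect is Step (3)–(4): the commutator/Ricci terms on $\mathbb{S}^{n-1}$ (using $\nabla_{ij}b_{kl}$ vs.\ $\nabla_{kl}b_{ij}$ one picks up curvature-of-the-sphere corrections $\sim b_{ij}$) combined with the fact that $G$ depends on the full vector $X=\nabla h+hx$ rather than on $|X|$ alone — so $\delbar G$ paired with $\nu$ and with $X$ enters the evolution of $F$ in a way that cannot be simplified as in \cite{LSW.JEMS}. Controlling the terms $\frac{\pd}{\pd t}\bigl(\frac{h}{G(\nabla h+hx)\varphi(h)}\bigr)$ and its spatial derivatives requires bounding $\langle\delbar G,\nu\rangle$ and $\langle\delbar G, X\rangle$ and their derivatives, which is why the constant $C$ in the statement must depend on $\|G\|_{C^2(\Omega_{[0,T)})}$ (and similarly $\|f\|_{C^2}$, $\|\varphi\|_{C^2}$). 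Carefully tracking these and choosing the hierarchy $1 \ll B \ll A$ so every bad term is absorbed is the delicate bookkeeping at the heart of the proof; once the auxiliary inequality $\pd_t w \le C' - \delta\, b_{11}\cdot(\text{positive})$ is established at a maximum point, the conclusion is immediate.
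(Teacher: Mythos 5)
Your proposal is correct and follows essentially the same route as the paper: the identical auxiliary function $w=\log\lambda_{\max}(b_{ij})-A\log h+B|\nabla h|^2$, diagonalizing at the spatial maximum, commuting covariant derivatives via the Gauss equation on $\mathbb{S}^{n-1}$, using $\nabla w=0$ to substitute for $\nabla b_{11}$, absorbing the $G,\varphi,f$ derivative terms via $C^2$ bounds on $\Omega_{[0,T)}$ and $I_{[0,T)}$, and choosing $B$ large (to kill the $b_{11}$-linear terms) and then $A$ large relative to $B\max|\nabla h|^2$. The bookkeeping you flag as the delicate part is indeed what occupies the bulk of the paper's proof, and you correctly single out the feature that $G$ depends on the full vector $X$ rather than $|X|$ as the main complication relative to Li–Sheng–Wang.
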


\begin{proof}
Consider the auxiliary function
$$w (x,t) = \log \lambda_{\max}(b_{ij}) - A\log h + B|\nabla h|^{2},$$
where $A,B$ are constants to be determined. $\lambda_{\max} $ is the maximal eigenvalue of $b_{ij}$.

For any fixed $t$, we assume that $\max\limits_{\mathbb{S}^{n-1}}w (x,t)$ is
attained at $p \in \mathbb{S}^{n-1}$. At $p $, we take an orthogonal frame such that
$b_{ij}(p,t)$ is diagonal and $\lambda_{\max} (p,t) = b_{11}(p,t).$

Now we can write $w (x,t)$ as
$$w (x,t) = \log b_{11} - A\log h + B|\nabla h|^{2}.$$

First, we compute the evolution equation of $w$. Note that
\begin{align*}
  \frac{\pd \log{b_{11}}}{\pd t} - b^{ii}F\nabla_{ii}\log{b_{11}}
  &= b^{11}(\frac{\pd b_{11}}{\pd t}  - Fb^{ii}\nabla_{ii}b_{11} ) + Fb^{ii}(b^{11})^{2}(\nabla_{i}b_{11})^{2}, \\
  \frac{\pd \log{h}}{\pd t} - b^{ii}F\nabla_{ii}\log{h}
  &= \frac{1}{h}(\frac{\pd h}{\pd t}  - Fb^{ii}\nabla_{ii}h ) +  \frac{Fb^{ii}h_{i}^{2}}{h^{2}}, \\
  \frac{\pd |\nabla h|^{2}}{\pd t} - b^{ii}F\nabla_{ii}|\nabla h|^{2}
  &= 2h_{k}(\frac{\pd h_{k}}{\pd t}  - Fb^{ii}\nabla_{ii}h_{k} ) - 2Fb^{ii}h_{ii}^{2},
\end{align*}
we have
\begin{equation*}
  \begin{split}
    \frac{\pd w}{\pd t}  -  Fb^{ii}\nabla_{ii} w
    &= b^{11}(\frac{\pd b_{11}}{\pd t}  - Fb^{ii}\nabla_{ii}b_{11} )  + Fb^{ii}(b^{11})^{2}(\nabla_{i}b_{11})^{2} \\
    &\hskip1.1em - \frac{A}{h}(\frac{\pd h}{\pd t}
    - Fb^{ii}\nabla_{ii}h ) -   \frac{Fb^{ii}h_{i}^{2}}{h^{2}} \\
    &\hskip1.1em + 2Bh_{k}(\frac{\pd h_{k}}{\pd t}  - Fb^{ii}\nabla_{ii}h_{k} ) - 2BFb^{ii}h_{ii}^{2}.
  \end{split}
\end{equation*}

Let
\begin{equation*}
M = \log \frac{hf(x)}{G(\nabla h + hx)\varphi(h)},
\end{equation*}
then $$\log F =  \log \mathcal{K} + M.$$
Differentiating the above equation, by \eqref{dK} we have
\begin{equation*}
\frac{\nabla_{k}F}{F}
= \frac{1}{\mathcal{K}}\frac{\pd \mathcal{K}}{\pd b_{ij}} \nabla_{k}b_{ij} +  \nabla_{k}M
= -b^{ij}\nabla_{k}b_{ij} +  M_{k},
\end{equation*}
and
\begin{equation*}
\frac{\nabla_{kl}F}{F}  - \frac{\nabla_{k}F\nabla_{l}F}{F^{2}}
= -b^{ij}\nabla_{kl}b_{ij} + b^{ii}b^{jj}\nabla_{k}b_{ij}\nabla_{l}b_{ij} +  \nabla_{lk}M.
\end{equation*}
From the above results and the evolution equation of $h$, we have
\begin{equation*}
\begin{split}
\frac{\pd h}{\pd t}  - Fb^{ii}\nabla_{ii}h &= -F + h - Fb^{ii}(b_{ii} - \delta_{ii}h)\\
&=-Fn + h + Fh \sum_{i}b^{ii},
\end{split}
\end{equation*}
and
\begin{equation*}
\begin{split}
\frac{\pd h_{k}}{\pd t} - Fb^{ii}\nabla_{ii}h_{k}
&= -F_{k} + h_{k} - Fb^{ii} \nabla_{k}b_{ii} + Fh_{k}\sum_{i}b^{ii}\\
&= -M_{k}F + h_{k} + Fh_{k}\sum_{i}b^{ii}.
\end{split}
\end{equation*}

On the other hand, from the evolution equation of $h$, we have
\begin{equation*}
  \begin{split}
    \frac{\pd h_{kl}}{\pd t}
    & = - \nabla_{kl}F + h_{kl} \\
    &= - \frac{\nabla_{k}F\nabla_{l}F}{F}  + Fb^{ij}\nabla_{kl}b_{ij} - Fb^{ii}b^{jj}\nabla_{k}b_{ij}\nabla_{l}b_{ij} - F \nabla_{lk}M +h_{kl}.
  \end{split}
\end{equation*}
By the Gauss equation, see \cite{Urb.JDG.33-1991.91} for details,
 \beqs\nabla_{kl} h_{ij} = \nabla_{ij} h_{kl} + 2\delta_{kl}h_{ij} - 2\delta_{ij}h_{kl} + \delta_{kj}h_{il} - \delta_{li}h_{kj},\eeqs
or \beqs \nabla_{kl}b_{ij } = \nabla_{ij} b_{kl} + \delta_{kl}h_{ij} - \delta_{ij}h_{kl} + \delta_{kj}h_{il} - \delta_{li}h_{kj}.\eeqs
Then
\begin{equation*}
  \begin{split}
    \frac{\pd h_{kl}}{\pd t}
&= Fb^{ij}\nabla_{ij}h_{kl} + 2\delta_{kl}Fb^{ij}h_{ij} -  Fb^{ij}\delta_{ij}h_{kl} + Fb^{ik}h_{il} - Fb^{jl}h_{kj}\\
&\hskip1.1em - \frac{\nabla_{k}F\nabla_{l}F}{F}    - Fb^{ii}b^{jj}\nabla_{k}b_{ij}\nabla_{l}b_{ij} - F \nabla_{lk}M +h_{kl}.
\end{split}
\end{equation*}
Hence
\begin{equation*}
\begin{split} \frac{\pd b_{kl}}{\pd t}
&= Fb^{ij}\nabla_{ij}b_{kl} + \delta_{kl}Fb^{ij}(b_{ij} -h\delta_{ij} )-  Fb^{ij}\delta_{ij}(b_{kl}- h\delta_{kl})\\
&\hskip1.1em + Fb^{ik}h_{il} - Fb^{jl}h_{kj} + b_{kl} - h\delta_{kl} + (-F + h)\delta_{kl}   \\
&\hskip1.1em- \frac{\nabla_{k}F\nabla_{l}F}{F}- Fb^{ii}b^{jj}\nabla_{k}b_{ij}\nabla_{l}b_{ij} - F \nabla_{lk}M \\
&= Fb^{ij}\nabla_{ij}b_{kl} + \delta_{kl}F(n-2) - Fb^{ij}\delta_{ij}b_{kl}\\
&\hskip1.1em+ Fb^{ik}h_{il} - Fb^{jl}h_{kj} + b_{kl} \\
&\hskip1.1em- \frac{\nabla_{k}F\nabla_{l}F}{F}   - Fb^{ii}b^{jj}\nabla_{k}b_{ij}\nabla_{l}b_{ij} - F \nabla_{lk}M.
\end{split}
\end{equation*}
When $k=l=1$, we  have \begin{equation*}
\begin{split} \frac{\pd b_{11}}{\pd t}
&= Fb^{ii}\nabla_{ii}(b_{11}) + F(n-2) - F\sum_{i}b^{ii}b_{11} + b_{11}\\
&\hskip1.1em - \frac{F_{1}^{2}}{F}   - Fb^{ii}b^{jj}\nabla_{1}(b_{ij})^{2} - F M_{11} .
\end{split}
\end{equation*}

By the above results, we get
\begin{equation*}
  \begin{split}
    \frac{\pd w}{\pd t}
    & -  Fb^{ii}\nabla_{ii} w \\
    &\leq b^{11}F(n-2) + 1 - A - b^{11}F M_{11} + \frac{AFn}{h} - AF\sum_{i}b^{ii}\\
    &\hskip1.1em -2BFh_{k}M_{k} + 2B|\nabla h|^{2} + 2BF|\nabla h|^{2}\sum_{i}b^{ii} - 2BF \sum_{i}b_{ii} + 4BFh(n-1)\\
    &= b^{11}F(n-2) - b^{11}F M_{11}-2BFh_{k}M_{k} - (A-2B|\nabla h|^{2})F\sum_{i}b^{ii}\\
    &\hskip1.1em -(A-1 - 2B |\nabla h|^{2})  - 2BF \sum_{i}b_{ii} + 4BFh(n-1)+ \frac{AFn}{h}.
  \end{split}
\end{equation*}
If we let $A$ satisfy $$ A \geq 2B \max_{\mathbb{S}^{n-1} \times [0, T)}{|\nabla h|^{2}} + 1,$$
then
\begin{equation}\label{eq:5}
  \begin{split}
    \frac{\pd w}{\pd t}  -  Fb^{ii}\nabla_{ii} w  
    &\leq b^{11}F(n-2) - b^{11}F M_{11}-2BFh_{k}M_{k} \\
    &\hskip1.1em - 2BF \sum_{i}b_{ii} + 4BFh(n-1)+ \frac{AFn}{h}.
  \end{split}
\end{equation}

Now, we estimate $  - b^{11}F M_{11}-2BFh_{k}M_{k}. $
Since
\begin{equation*}
  \begin{split}
    M &= \log {\frac{hf(x)}{G(\nabla h + hx)\varphi(h)}} \\
    & = \log {f(x)} + \log {h(x)}
-\log{G(\nabla h + hx)} - \log{\varphi(h)},
  \end{split}
\end{equation*}
then
\begin{equation*}
  \begin{split}
    \nabla_{k}M
    &= \frac{f_{k}}{f} + \frac{h_{k}}{h} - \frac{1}{G} \langle \overline{\nabla} G, \nabla_{k} X\rangle - \frac{\varphi'}{\varphi}h_{k}\\
& = \frac{f_{k}}{f} + \frac{h_{k}}{h} - \frac{\varphi'}{\varphi}h_{k} - \frac{1}{G} \langle \overline{\nabla} G, e_{i} \rangle b_{ki}.
\end{split}
\end{equation*}
And
\begin{equation*}
  \begin{split}
    \nabla_{11}M
    &= \frac{f_{11}}{f} - \frac{f_{1}^{2}}{f^{2}} + \frac{h_{11}}{h} - \frac{h_{1}^{2}}{h^{2}}
    -  \frac{\varphi''h_{1}^{2} + \varphi'h_{11}}{\varphi} + \frac{(\varphi')^{2}h_{1}^{2}}{\varphi^{2}}\\
    &\hskip1.1em + \frac{1}{G^{2}} \langle \overline{\nabla} G, e_{i} \rangle \langle \overline{\nabla} G, e_{j} \rangle b_{1i}b_{1j}
    - \frac{1}{G} \langle \nabla_{1}\overline{\nabla} G, e_{i} \rangle b_{1i} \\
    & \hskip1.1em + \frac{1}{G}\langle \overline{\nabla} G, x\rangle b_{11}
    - \frac{1}{G} \langle \overline{\nabla} G, e_{i} \rangle b_{11i},
  \end{split}
\end{equation*}
where $\varphi'' = \frac{\pd ^{2}\var(h)}{\pd h^{2}}$ and  we have used the Gauss formula on $\uS$
\beqs \nabla_{ij}x = -\delta_{ij}x.\eeqs
From above computations, we obtain
\begin{equation*}
  \begin{split}
    -2Bh_{k}M_{k}
&= -2Bh_{k}(\frac{f_{k}}{f} + \frac{h_{k}}{h} -   \frac{1}{G} \langle \overline{\nabla} G, e_{i} \rangle b_{ki}- \frac{\varphi'}{\varphi}h_{k})\\
&\leq 2B(\frac{|\nabla f||\nabla h|}{f} + |\nabla h|^{2}\frac{\varphi'}{\varphi}  ) + 2B\frac{h_{k}}{G}\langle \overline{\nabla} G, e_{k} \rangle b_{kk}\\
&\leq  c_{1}B + 2B\frac{h_{k}}{G}\langle \overline{\nabla} G, e_{k} \rangle b_{kk},
\end{split}
\end{equation*}
where $c_{1}$ is positive constants depending on  $\|\var\|_{C^1(I_{[0, T)})}$, $\|f\|_{C^1(\uS)}$ and 
$\|h\|_{C^1(\uS \times [0, T))}$.
We also have
\begin{equation*}
\begin{split}  - b^{11} M_{11}
&=  - b^{11} (\frac{f_{11}}{f} - \frac{f_{1}^{2}}{f^{2}}  - \frac{h_{1}^{2}}{h^{2}}  -  \frac{\varphi''h_{1}^{2} }{\varphi} + \frac{(\varphi')^{2}h_{1}^{2}}{\varphi^{2}}) + b^{11}\frac{\varphi'(b_{11} - h)}{\varphi}- b^{11}\frac{b_{11}-h}{h}\\
&\hskip1.1em - \frac{1}{G^{2}} \langle \overline{\nabla} G, e_{1} \rangle ^{2} b_{11} + \frac{1}{G} \langle \nabla_{1}\overline{\nabla} G, e_{i} \rangle  - \frac{1}{G}\langle \overline{\nabla} G, x\rangle  + \frac{1}{G} \langle \overline{\nabla} G, e_{i} \rangle b^{11}b_{11i}\\
&\leq  c_{2}b^{11} + c_{3}  + b_{11}\frac{1}{G}\langle \langle \overline{\nabla}^{2} G, e_{1} \rangle, e_{1} \rangle  + \frac{1}{G} \langle \overline{\nabla} G, e_{i} \rangle b^{11}b_{11i}\\
&\leq c_{2}b^{11} + c_{3}  + c_{4} b_{11} + \frac{1}{G} \langle \overline{\nabla} G, e_{i} \rangle b^{11}b_{11i},
\end{split}
\end{equation*}
where  $c_{2}$ is positive constant depending  on $\|\var\|_{C^2(I_{[0,T)})}$, $\|f\|_{C^{2}(\uS)}$ and $\|h\|_{C^1(\mathbb{S}^{n-1} \times [0, T))}$ and positive constants $c_{3}$ and $c_{4}$ depend  on $\|G\|_{C^{2}(\Omega_{[0,T)})}$.

From the definition of $w$,
\begin{equation*}
\begin{split}  b^{11}\nabla_{l}b_{11}
& = \nabla_{l} w + \frac{A}{h}h_{l} - 2Bh_{lk}h_{k}\\
& =  \nabla_{l} w + \frac{A}{h}h_{l} - 2Bb_{lk}h_{k} + 2Bhh_{l},
\end{split}
\end{equation*}
At point $p$, we have \beqs - b^{11} M_{11}  \leq c_{1}B  + c_{2}b^{11} + c_{4}b_{11} + c_{5} - 2B\frac{h_{k}}{G}\langle \overline{\nabla} G, e_{k} \rangle b_{kk},
\eeqs
where $c_{5}$ depends  on $\|G\|_{C^2(\Omega_{[0,T)})}$ and $\|h\|_{C^{1}(\mathbb{S}^{n-1} \times [0, T))}$.

Now, we have proved that
\begin{equation*}
 - b^{11}F M_{11}-2BFh_{k}M_{k}
\leq  F(c_{1}B  + c_{2}b^{11} + c_{4}b_{11} + c_{5} ).
\end{equation*}
Hence, from \eqref{eq:5} we have
\begin{multline*}
\frac{\pd w}{\pd t}  -  Fb^{ii}\nabla_{ii} w
\leq
F(c_{1}B  + c_{2}b^{11} + c_{4}b_{11} + c_{5} )  - 2BF \sum_{i}b_{ii} + 4BFh(n-1)+ \frac{AFn}{h}.
\end{multline*}
If we take \beqs B > c_{4}, \eeqs
then for $b_{ii}$ large enough, there is
\begin{equation*}
  \begin{split}
    \frac{\pd w}{\pd t}  -  Fb^{ii}\nabla_{ii} w
    &\leq F(c_{1}B  + c_{2}b^{11} + c_{5}  )  - BF \sum_{i}b_{ii} + 4BFh(n-1)+ \frac{AFn}{h} \\
    &< 0,
  \end{split}
\end{equation*}
which implies that
\beqs \frac{\pd w}{\pd t}  \leq 0. \eeqs
Now recalling Lemma \ref{lem3.1} and Corollary \ref{cor3.2}, one can easily see
the conclusion of this lemma holds.
\end{proof}

Now we have proved that the principal curvatures of $M_{t}$ have uniform upper and lower bounds, this
together with Lemma \ref{lem3.1} and Corollary \ref{cor3.2} implies that the
evolution equation \eqref{seq} is uniformly parabolic on any finite time
interval. Thus, the result of \cite{KS.IANSSM.44-1980.161} and the standard
parabolic theory show that the smooth solution of \eqref{seq} exists for all
time. Namely flow \eqref{floweq} has a long-time solution. 
And by these estimates again, a subsequence of $M_t$ converges in $C^\infty$ to
a positive, smooth, uniformly convex hypersurface $M_\infty$ in $\R^n$.
Now to complete the proof of Theorem \ref{thm2}, it remains to check $M_\infty$
satisfies Eq. \eqref{dOMP-f} with $c=1$.

\section{Existence of the solutions to the equation}
In this section, we first complete the proof of Theorem \ref{thm2}, namely we
will prove the support function $h_\infty$ of $M_\infty$ satisfies the following equation:
\beqn \label{ceq} \det{(\nabla^{2}h + hI)}G(\nabla{h}+hx)\var(h) = f(x),\indent x \in \mathbb{S}^{n-1}.\eeqn
To achieve this, we define a functional, which is non-increasing along the flow.

Define \begin{equation*}
\begin{split}
\widetilde{G}(r,u) &= \int^{r}G(s,u)s^{n-1}ds, \\
\widetilde{\varphi}(t) &= \int^{t} \frac{1}{\varphi(s)}ds.
\end{split}
\end{equation*}
It can be checked that flow $\eqref{floweq}$ is the gradient flow of the functional given by
\begin{equation*}
J(M) = \int_{S^{n-1}}\widetilde{\varphi}(h)f(x)dx - \int_{S^{n-1}}\widetilde{G}(\rho,u)du.
\end{equation*}
Here $h$ and $\rho$ are the support function and radial function of $M$ respectively.

Now, we show that the functional $J(M_{t})$ is non-increasing along the flow $\eqref{floweq}$.

\begin{lemma}\label{lem4.1} The  functional $J(M_{t})$ is non-increasing along the flow $\eqref{floweq}$. That is
\beqs \frac{\pd J}{\pd t}  \leq 0,\eeqs
and the equality holds if and only if $M_{t}$ satisfies the elliptic equation $\eqref{ceq}$.
\end{lemma}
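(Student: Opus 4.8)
The plan is to differentiate $J(M_t)$ directly in $t$ and show the time derivative is a non-positive quantity that vanishes exactly at self-similar (elliptic) solutions. First I would compute $\frac{\pd}{\pd t}\int_{S^{n-1}}\widetilde{\varphi}(h)f(x)\,dx$ using the support-function evolution equation \eqref{seq}. Since $\widetilde{\varphi}'(t)=1/\varphi(t)$, this gives $\int_{S^{n-1}} \frac{f(x)}{\varphi(h)}\,h_t\,dx = \int_{S^{n-1}} \frac{f(x)}{\varphi(h)}\Bigl(-f(x)\mathcal{K}\frac{h}{G(\nabla h+hx)\varphi(h)}+h\Bigr)dx$. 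Next I would compute $\frac{\pd}{\pd t}\int_{S^{n-1}}\widetilde{G}(\rho,u)\,du$ using the radial-function evolution equation \eqref{req}; since $\partial_r\widetilde{G}(r,u)=G(r,u)r^{n-1}$, this equals $\int_{S^{n-1}} G(\rho,u)\rho^{n-1}\,\rho_t\,du = \int_{S^{n-1}} G(\rho,u)\rho^{n-1}\Bigl(-f\mathcal{K}\frac{\rho}{G(\rho,u)\varphi(h)}+\rho\Bigr)du$.

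The key step is to rewrite the $\rho$-integral as an integral over $\uS$ in the $x$ variable using the change-of-variables formula \eqref{eq:1} with the correct integrand. Recall $\rho(u)u=\nabla h(x)+h(x)x$ at corresponding points, $\rho^2=h^2+|\nabla h|^2$, and $\mathcal{K}=(\det(\nabla^2 h+hI))^{-1}$; the Jacobian factor in \eqref{eq:1} is precisely $h(x)\rho(\alpha_M^*(x))^{-n}\det(\nabla^2h+hI)(x)$. Applying this to $g(u)=G(\rho,u)\rho^{n-1}\cdot\rho_t/\rho$ — more precisely tracking the powers of $\rho$ — converts $\int G(\rho,u)\rho^n\,du$ and $\int f\mathcal{K}\rho^n/\varphi(h)\,du$ into $\int_{\uS}[\cdots]h(x)\det(\nabla^2h+hI)\,dx$ type expressions. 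After this substitution, the two time-derivative terms should combine so that
\[
\frac{\pd J}{\pd t} = -\int_{S^{n-1}} \frac{h}{\varphi(h)}\,\mathcal{K}\,\frac{1}{f(x)^{-1}G(\nabla h+hx)}\Bigl(\frac{f(x)\mathcal{K}h}{G(\nabla h+hx)\varphi(h)}-h\Bigr)\cdots\,dx,
\]
i.e. $\frac{\pd J}{\pd t}$ equals $-\int_{S^{n-1}}(\text{positive weight})\cdot h_t^{\,2}\cdot(\text{positive weight})\,dx \le 0$ after one recognizes that $h_t=-F+h$ where $F=f\mathcal{K}h/(G\varphi)$, so the bracketed mismatch is exactly $-h_t$ up to a positive factor. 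This makes the sign manifest and the equality case transparent: $\frac{\pd J}{\pd t}=0$ iff $h_t\equiv0$, which is exactly Eq. \eqref{ceq} (equivalently \eqref{dOMP-f} with $c=1$), since a stationary support function for \eqref{seq} satisfies $F=h$, i.e. $f\mathcal{K}h=G(\nabla h+hx)\varphi(h)h$, i.e. $\det(\nabla^2h+hI)G(\nabla h+hx)\varphi(h)=f(x)$.

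The main obstacle I expect is the bookkeeping in the change of variables: one must be careful that the evolution equations \eqref{seq} and \eqref{req} are written in the $x$ and $u$ variables respectively, that $x=x(u,t)$ and $u=u(x,t)$ are the corresponding normal/radial correspondences, and that the identity $\frac{1}{\rho}\rho_t=\frac{1}{h}h_t$ derived earlier in Section 2 is used to pass between them. In particular one should verify that the "potential" terms $\int \frac{f h}{\varphi(h)}dx$ from the first derivative and $\int G(\rho,u)\rho^n du$ from the second match up after the substitution \eqref{eq:1}, and likewise that the two Gauss-curvature terms match, so that everything collapses into a single perfect-square integrand. Once the substitution is set up correctly this is a routine (if slightly lengthy) computation; the conceptual content is simply that $J$ is the primitive whose gradient flow is \eqref{floweq}, so its dissipation is $-\|\partial_t X\|^2$ in the appropriate weighted $L^2$ metric on $\uS$.
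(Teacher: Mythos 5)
Your proposal follows essentially the same route as the paper: differentiate $J$ in $t$ using the $h$- and $\rho$-evolution equations, convert the $\rho$-integral to the $x$-variable (the paper uses the equivalent identity $\rho^n\,du = (h/\mathcal{K})\,dx$ in place of \eqref{eq:1}), exploit $\rho_t/\rho = h_t/h$ to merge the two terms, and recognize the resulting integrand as $-(\text{positive weight})\cdot(\text{square})$, with equality iff $h_t\equiv 0$, i.e.\ \eqref{ceq}. This is exactly the paper's argument, only written out more explicitly there; your sketch is correct.
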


\begin{proof}
We begin from the equation
  \begin{equation*}
\frac{\pd J}{\pd t} = \int_{S^{n-1}}\frac{1}{\varphi(h)} f(x)\frac{\pd h}{\pd t}dx -\int_{S^{n-1}}G(\rho,u)\rho^{n-1}\frac{\pd \rho}{\pd t}du.
\end{equation*}
Since $$ \frac{1}{\rho(u,t)}\frac{\pd \rho}{\pd t} =   \frac{1}{h(x,t)}\frac{\pd
  h}{\pd t},$$ 
then by the fact
\begin{equation*}
 \rho^{n} du = \frac{h(x)}{\mathcal{K}(x)}dx, 
\end{equation*}
we have
\begin{equation*}
\begin{split}
\frac{\pd J}{\pd t} & =  \int_{S^{n-1}}(\frac{f(x)}{\varphi(h)} - \frac{G(\rho,u)}{\mathcal{K}(x)} )\frac{\pd h}{\pd t}dx \\
& = - \int_{S^{n-1}}\frac{G(\rho,u)}{\mathcal{K}(x)}h(\frac{f(x)}{\varphi(h)}\frac{\mathcal{K}(x)}{G(\rho,u)} -1 )^{2}dx\\
& \leq 0.
\end{split}
\end{equation*}

The equality holds if and only if
\begin{equation*}
\frac{f(x)}{\varphi(h)}\frac{\mathcal{K}(x)}{G(\rho,u)} =1,
\end{equation*}
which is just the equation $\eqref{ceq}$.
\end{proof}

From Lemma \ref{lem3.1}, Corollary \ref{cor3.2} and the definition of $J(M_t)$,
there exists a positive constant $C$ which is independent of $t$, such that
\begin{equation} \label{eq:4}
  |J(M_t)|\leq C, \quad \forall t>0.
\end{equation}
By the proof of Lemma \ref{lem4.1}, we have
\begin{equation}\label{eq:6}
  J(M_0) -J(M_t)
  = \int_{0}^{t} \dd t \int_{S^{n-1}}\frac{G(X_t)}{\mathcal{K}(x)}h(x,t) \Bigl(
  \frac{f(x)}{\varphi(h)}\frac{\mathcal{K}(x)}{G(X_t)} -1 \Bigr)^{2}\dd x.
\end{equation}
Combining \eqref{eq:4} and \eqref{eq:6}, and recalling
Lemmas \ref{lem3.3} and \ref{lem3.4}, we have that
\begin{equation*}
  \int_{0}^{\infty} \dd t \int_{S^{n-1}}\Bigl(
  \frac{f(x)}{\varphi(h)}\frac{\mathcal{K}(x)}{G(X_t)} -1 \Bigr)^{2} \dd x < \infty.
\end{equation*}
This implies that there exists a subsequence of times $ t_{j} \rightarrow \infty
$ such that
\begin{equation*}
\int_{S^{n-1}}\Bigl(
  \frac{f(x)}{\varphi(h)}\frac{\mathcal{K}(x)}{G(X_{t_j})} -1 \Bigr)^{2} \dd x \to 0 \ \text{ as } t_j\to\infty.
\end{equation*}
Passing to the limit, we obtain
\begin{equation*}
\int_{S^{n-1}}\Bigl(
\frac{f(x)}{\varphi(h_\infty)}\frac{\mathcal{K}(x)}{G(X_\infty)} -1 \Bigr)^{2} \dd x =0,
\end{equation*}
where $h_\infty$ is the support function of $M_\infty$, and $X_\infty=\delbar h_\infty$.
This implies that
\begin{equation*}
\frac{f(x)}{\varphi(h_\infty)}\frac{\mathcal{K}(x)}{G(\delbar h_\infty)} =1, \quad \forall x\in\uS,
\end{equation*}
which is just equation \eqref{ceq}.
The proof of Theorem \ref{thm2} is now completed.

At the same time, for Theorem \ref{thm1}, we have
 showed that for smooth $\var, G, f$, there exists a smooth solution $h$ to
 $\eqref{ceq}$.
 We now begin to prove Theorem \ref{thm1} for general case.
 Note that
\begin{lemma}
  \label{lem000}
For any positive solution $h$ to Eq. \eqref{dOMP-f} with $c=1$,
  we have
  \begin{equation*}
C_1\leq h\leq C_2 \text{ on } \uS,
  \end{equation*}
  where positive constants $C_1$ and $C_2$ depend only on the bounds of $f(x)$
  and \\ $\varphi(s) \max_{|y|=s} G(y) s^{n-1}$. 
\end{lemma}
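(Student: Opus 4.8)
The plan is to obtain the two-sided bound by a maximum-principle argument applied directly to the elliptic equation \eqref{dOMP-f} with $c=1$, which is exactly the strategy already carried out for the parabolic flow in Lemma \ref{lem3.1}, but now with the time-derivative term absent. Write the equation in the form
\begin{equation*}
  \varphi(h)\, G(\nabla h + hx)\, \det(\nabla^2 h + hI) = f(x), \quad x\in\uS.
\end{equation*}
First I would pass to an extremal point: suppose $h$ attains its maximum over $\uS$ at $x^0$. There $\nabla h = 0$, so $\nabla h + hx = h x$ with $|hx| = h$, and $\nabla^2 h + hI \le hI$, hence $\det(\nabla^2 h + hI) \le h^{n-1}$. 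Substituting into the equation gives $f(x^0) \le \varphi(h)\, G(hx^0)\, h^{n-1} \le \varphi(h)\,\max_{|y|=h} G(y)\, h^{n-1}$. Since $f \ge \min_{\uS} f > 0$, this forces $\varphi(h)\max_{|y|=h}G(y)\,h^{n-1} \ge \min_{\uS} f$ at the value $h = \max_{\uS} h$. By the upper condition in \eqref{cond-f}, namely $\limsup_{s\to+\infty}\bigl[\varphi(s)\max_{|y|=s}G(y)s^{n-1}\bigr] < f$, there is a constant $C_1^*$ (depending only on the indicated quantities) beyond which $\varphi(s)\max_{|y|=s}G(y)s^{n-1} < \min_{\uS} f$; hence $\max_{\uS} h \le C_1^*$, giving the upper bound $h \le C_2$.

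Symmetrically, at a minimum point $x^1$ we have $\nabla h = 0$ and $\nabla^2 h + hI \ge hI$, so $\det(\nabla^2 h + hI) \ge h^{n-1}$, whence $f(x^1) \ge \varphi(h)\,G(hx^1)\,h^{n-1} \ge \varphi(h)\,\min_{|y|=h}G(y)\,h^{n-1}$; since $f \le \max_{\uS} f$ this yields $\varphi(h)\min_{|y|=h}G(y)\,h^{n-1} \le \max_{\uS} f$ at $h = \min_{\uS} h$. The lower bound in \eqref{cond-f}, namely $\max_{\uS} f < \liminf_{s\to0^+}\bigl[\varphi(s)\min_{|y|=s}G(y)s^{n-1}\bigr]$, shows there is a constant $C_2^* > 0$ such that $\varphi(s)\min_{|y|=s}G(y)s^{n-1} > \max_{\uS} f$ for all $s < C_2^*$; hence $\min_{\uS} h \ge C_2^*$, giving the lower bound $h \ge C_1$.

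There is essentially no obstacle here: the argument is a routine application of the comparison at extremal points, and since $h$ solves the elliptic equation (not an evolution equation) we need not track any initial data, so the constants $C_1, C_2$ depend only on $\min_{\uS} f$, $\max_{\uS} f$ and the asymptotic quantities $\varphi(s)\max_{|y|=s}G(y)s^{n-1}$ appearing in \eqref{cond-f}, as claimed. The only point requiring a word of care is that the eigenvalue inequalities $\nabla^2 h + hI \le hI$ at a maximum and $\ge hI$ at a minimum hold because $\nabla^2 h \le 0$ (resp. $\ge 0$) there; combined with $\det$ being monotone on positive-definite matrices (the solution is assumed positive and uniformly convex, so $\nabla^2 h + hI > 0$), the determinant comparisons follow. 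This completes the proof.
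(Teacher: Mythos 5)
Your proof is correct and follows essentially the same route as the paper: evaluate the equation at the maximum (resp.\ minimum) of $h$, use $\nabla h = 0$ and $\nabla^2 h \le 0$ (resp.\ $\ge 0$) to bound $\det(\nabla^2 h + hI)$ by $h^{n-1}$ from above (resp.\ below), compare against $\max_{|y|=h} G(y)$ (resp.\ $\min_{|y|=h} G(y)$), and invoke condition \eqref{cond-f} to conclude. The only difference is presentational: you spell out both the maximum and minimum cases, while the paper gives the maximum case explicitly and says the other is similar.
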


\begin{proof}{}
  Assume $\max_{\uS} h=h(\bar{x}) =M$. From its equation
\begin{equation*}
  \varphi(h) G(\delbar h) \det(\nabla^2h +hI) =f \text{ on } \uS,
\end{equation*}
we have
\begin{equation*}
  \begin{split}
f(\bar{x})
&\leq \varphi(h(\bar{x})) G(h(\bar{x}) \bar{x}) h(\bar{x})^{n-1} \\
&\leq \varphi(M) \max_{|y|=M}G(y) M^{n-1},
  \end{split}
\end{equation*}
which together with the assumption \eqref{cond-f} implies that
$M\leq C$ for a constant $C$ 
depending only on $f$ and 
$\varphi(s) \max_{|y|=s} G(y) s^{n-1}$.
Similarly, considering the minimum point of $h$, we obtain the lower bound of $h$.
\end{proof}

Now by a standard approximation and the regularity theory of \MA equation, one
can easily see Theorem \ref{thm1} holds.

\vskip3ex
As said in the Introduction, for general $\varphi$ and $G$, there is no
uniqueness for solutions to Eq. \eqref{dOMP-f}.
We here provide a special uniqueness result to end this paper.
A similar result can be found in \cite[Theorem 6.5]{GHXY.2018}.
Here we provide a different proof which is inspired by
\cite[Proposition 2.1]{CW.Adv.205-2006.33}.

\begin{theorem}\label{thm3}
  Assume $G(y)=G(|y|)$.
  If whenever
\begin{equation} \label{eq:2}
  \varphi(\lambda s_1) G(\lambda s_2) \leq \varphi(s_1) G(s_2) \lambda^{1-n}
\end{equation}
holds for some positive $s_1$, $s_2$ and $\lambda$, there must be $\lambda\geq1$.
Then the solution to the following equation
  \begin{equation} \label{eq:111}
  \det{(\nabla^{2}h + hI)}G(\delbar h)\var(h) = f(x)
  \end{equation}
  is unique.
 \end{theorem}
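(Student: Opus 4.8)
The plan is to run a maximum-principle comparison at the extrema of the quotient of two solutions, following the scheme of \cite[Proposition 2.1]{CW.Adv.205-2006.33}. Let $h_1$ and $h_2$ be two positive solutions of \eqref{eq:111}, i.e. support functions of smooth uniformly convex bodies, so that $\nabla^2 h_i + h_i I > 0$ on $\uS$. Because the hypothesis and the equation are symmetric in the two solutions, it suffices to prove $h_1 \ge h_2$. Set $\lambda := \min_{\uS}(h_1/h_2) > 0$, attained at some $x_0 \in \uS$. Then $\lambda h_2 - h_1 \le 0$ on $\uS$ with equality at $x_0$, so $x_0$ is a maximum of $\lambda h_2 - h_1$ on the closed manifold $\uS$; hence $\nabla h_1(x_0) = \lambda \nabla h_2(x_0)$ and $\nabla^2(\lambda h_2 - h_1)(x_0) \le 0$. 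Since also $h_1(x_0) = \lambda h_2(x_0)$, adding $(\lambda h_2 - h_1)(x_0) I = 0$ gives the matrix inequality $(\nabla^2 h_1 + h_1 I)(x_0) \ge \lambda\,(\nabla^2 h_2 + h_2 I)(x_0) > 0$, and therefore, taking determinants of $(n-1)\times(n-1)$ positive definite matrices,
\[
  \det(\nabla^2 h_1 + h_1 I)(x_0)\ \ge\ \lambda^{\,n-1}\det(\nabla^2 h_2 + h_2 I)(x_0)\ >\ 0 .
\]

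Next I would exploit the identity $\delbar h = \nabla h + h x$. From $\nabla h_1(x_0) = \lambda \nabla h_2(x_0)$ and $h_1(x_0) = \lambda h_2(x_0)$ we get $\delbar h_1(x_0) = \lambda\,\delbar h_2(x_0)$, hence $|\delbar h_1(x_0)| = \lambda\,|\delbar h_2(x_0)|$. Write $s_1 := h_2(x_0)>0$ and $s_2 := |\delbar h_2(x_0)|>0$. Evaluating \eqref{eq:111} at $x_0$ for $h_1$, using $G(y)=G(|y|)$ together with the determinant inequality above,
\[
  f(x_0) = \det(\nabla^2 h_1 + h_1 I)(x_0)\, G(\lambda s_2)\,\varphi(\lambda s_1)\ \ge\ \lambda^{\,n-1}\det(\nabla^2 h_2 + h_2 I)(x_0)\, G(\lambda s_2)\,\varphi(\lambda s_1),
\]
while evaluating it at $x_0$ for $h_2$ gives $f(x_0) = \det(\nabla^2 h_2 + h_2 I)(x_0)\, G(s_2)\,\varphi(s_1)$. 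Cancelling the strictly positive factor $\det(\nabla^2 h_2 + h_2 I)(x_0)$ yields
\[
  \varphi(\lambda s_1)\, G(\lambda s_2)\ \le\ \lambda^{\,1-n}\,\varphi(s_1)\, G(s_2),
\]
which is precisely an instance of \eqref{eq:2}. By the standing hypothesis this forces $\lambda \ge 1$, i.e. $h_1 \ge h_2$ on $\uS$. Swapping the roles of $h_1$ and $h_2$ gives $h_2 \ge h_1$, hence $h_1 \equiv h_2$.

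The argument is essentially formal once it is set up, and I do not expect a genuine obstacle; the one point that must be handled with care — and the reason for minimizing rather than maximizing $h_1/h_2$ — is the direction of the inequalities. At the minimum point of $h_1/h_2$ the determinant comparison and the subsequent comparison of $\varphi\cdot G$ both point in the direction $\varphi(\lambda s_1)G(\lambda s_2)\le\lambda^{1-n}\varphi(s_1)G(s_2)$ required by \eqref{eq:2}; at the maximum point one would obtain the reversed inequality, which carries no information under the hypothesis. The only analytic ingredient is the strict positivity of $\det(\nabla^2 h_2 + h_2 I)$ at $x_0$, which is immediate from \eqref{eq:111} (with $f,\varphi,G>0$) together with convexity, so the cancellation step is legitimate.
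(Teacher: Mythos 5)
Your proposal is correct and takes essentially the same approach as the paper: compare the two solutions at the extremal point of their quotient, use the second-derivative test to bound the Hessian determinants, and invoke the hypothesis \eqref{eq:2} to force the ratio to be at least $1$. The only difference is cosmetic — you work with $\lambda=\min(h_1/h_2)$ directly, while the paper writes $\delta=h_2(p_0)/h_1(p_0)$ at the maximum of $h_1/h_2$ (the same quantity) and phrases the step as a contradiction.
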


 \begin{proof}
 Let $h_{1}$ and $h_{2}$ be two solutions of \eqref{eq:111}, we first want to
 prove that
 \begin{equation} \label{eq:3}
\max \frac{h_{1}}{h_{2}} \leq 1. 
 \end{equation}

 Suppose \eqref{eq:3} is not true, namely $\max \frac{h_{1}}{h_{2}}>1$.
   We want to derive a contradiction.
   Assume $\frac{h_{1}}{h_{2}}$ attains its maximum at $p_{0} \in \mathbb{S}^{n-1} $.
Then $h_1(p_0) >h_2(p_0)$.
  Let $L = \log\frac{h_{1}}{h_{2}}$, then at $p_{0}$,
 \beqs  0 = \nabla L = \frac{\nabla h_{1}}{ h_{1}} - \frac{\nabla h_{2}}{ h_{2}},\eeqs
 and
 \begin{equation*}
\begin{split} 0 &\geq \nabla^{2} L\\
  & =  \frac{\nabla^{2} h_{1}}{ h_{1}} - \frac{\nabla h_{1} \otimes \nabla h_{1}}{h_{1}^{2}} -
  \frac{\nabla^{2} h_{2}}{ h_{2}} + \frac{\nabla h_{2} \otimes \nabla h_{2}}{h_{2}^{2}}\\
  & =  \frac{\nabla^{2} h_{1}}{ h_{1}} -   \frac{\nabla^{2} h_{2}}{ h_{2}}.\end{split}
\end{equation*}
 By equation \eqref{eq:111}, we have at $p_{0}$
 \begin{equation*}
\begin{split}
 1
& = \frac{\det{(\nabla^{2}h_{2} + h_{2}I)}G(|\nabla h_{2} + h_{2}x|)\var(h_{2})}{\det{(\nabla^{2}h_{1} + h_{1}I)}G(|\nabla h_{1} + h_{1}x|)\var(h_{1})}\\
& =  \frac{h_{2}^{n-1}\det{(\frac{\nabla^{2}h_{2}}{h_{2}} + I)}G(h_{2}(\sqrt{|\frac{\nabla h_{2}}{h_{2}}|^{2} + 1}))\var(h_{2})} {h_{1}^{n-1}\det{(\frac{\nabla^{2}h_{1}}{h_{1}} + I)}G(h_{1}(\sqrt{|\frac{\nabla h_{1}}{h_{1}}|^{2} + 1}))\var(h_{1})}\\
&\geq
\frac{h_{2}^{n-1}G(h_{2}(\sqrt{|\frac{\nabla h_{1}}{h_{1}}|^{2} + 1}))\var(h_{2})}
{h_{1}^{n-1}G(h_{1}(\sqrt{|\frac{\nabla h_{1}}{h_{1}}|^{2} + 1}))\var(h_{1})}.
\end{split}
\end{equation*}
Write $h_2(p_0)=\delta h_1(p_0)$, and
$s=h_{1}(\sqrt{|\frac{\nabla h_{1}}{h_{1}}|^{2} + 1})(p_0)$.
Then the above inequality reads
\begin{equation*}
\frac{\delta^{n-1}G(\delta s)\varphi(h_2)}{G(s)\varphi(h_1)}\leq 1,
\end{equation*}
namely
\begin{equation*}
G(\delta s)\varphi(\delta h_1)\leq G(s)\varphi(h_1) \delta^{1-n}.
\end{equation*}
By our assumption \eqref{eq:2}, we have $\delta\geq1$.
Namely $h_2(p_0)\geq h_1(p_0)$. This is a contradiction. Thus
\eqref{eq:3} holds.

Interchanging $h_1$ and $h_2$, \eqref{eq:3} implies
\begin{equation*}
  \max\frac{h_2}{h_1}\leq1.
\end{equation*}
Combining it with \eqref{eq:3}, we have $h_1\equiv h_2$. The proof is completed. 
\end{proof}






\end{document}